\theoremstyle{plain} 
\newtheorem{theorem}{\indent\bf Theorem}[section]
\theoremstyle{definition} 
\newtheorem{prob}[theorem]{\indent\bf Problem}
\newtheorem{thm}{Theorem}[section]
\newtheorem{cor}[thm]{Corollary}
\newtheorem{lem}[thm]{Lemma}
\theoremstyle{definition}
\newtheorem{defn}{Definition}[section]
\theoremstyle{remark}
\newtheorem{rem}{Remark}[section]
\newcommand{\be}{\begin{equation}}
	\newcommand{\ee}{\end{equation}}
\newcommand{\bea}{\begin{eqnarray}}
	\newcommand{\eea}{\end{eqnarray}}
\newcommand{\ben}{\begin{eqnarray*}}
	\newcommand{\een}{\end{eqnarray*}}
\newcommand{\bt}{\begin{split}}
	\newcommand{\et}{\end{split}}
\newcommand{\bet}{\begin{equation}}
	\newcommand{\mc}{\mathbb{C}}
	\newcommand{\ra}{\rightarrow}
\newcommand{\mo}{\mathcal{O}}
\newcommand{\bc}{\begin{center}}
\newcommand{\ec}{\end{center}}
\newcommand{\bi}{\begin{itemize}}
\newcommand{\ei}{\end{itemize}}
\newcommand{\h}{\textbf}
\begin{document}
		\title[Uniqueness of irreducible desingularization]
{Uniqueness of irreducible desingularization of singularities associated to negative vector bundles}
		
		\author[F. Deng]{Fusheng Deng}
		\address{Fusheng Deng: \ School of Mathematical Sciences, University of Chinese Academy of Sciences\\ Beijing 100049, P. R. China}
		\email{fshdeng@ucas.ac.cn}
		
	\author[Y. Li]{Yinji Li}
	\address{Yinji Li:  Institute of Mathematics\\Academy of Mathematics and Systems Sciences\\Chinese Academy of
		Sciences\\Beijing\\100190\\P. R. China}
	\email{1141287853@qq.com}
	\author[Q. Liu]{Qunhuan Liu}
		\address{Qunhuan Liu: \ School of Mathematical Sciences, University of Chinese Academy of Sciences\\ Beijing 100049, P. R. China}
		\email{liuqunhuan23@mails.ucas.edu.cn}

		\author[X. Zhou]{Xiangyu Zhou}
		\address{Xiangyu Zhou: Institute of Mathematics\\Academy of Mathematics and Systems Sciences\\and Hua Loo-Keng Key
			Laboratory of Mathematics\\Chinese Academy of
			Sciences\\Beijing\\100190\\P. R. China}
		\address{School of
			Mathematical Sciences, University of Chinese Academy of Sciences,
			Beijing 100049, P. R. China}
		\email{xyzhou@math.ac.cn}
		
\begin{abstract}
We prove that the irreducible desingularization of a singularity given by the Grauert blow down of a negative holomorphic vector bundle
over a compact complex manifold is unique up to isomorphism, and as an application, we show that two negative line bundles
over compact complex manifolds are isomorphic if and only if their Grauert blow downs have isomorphic germs near the singularities.
We also show that there is a unique way to modify a submanifold of a complex manifold to a hypersurface, namely,
the blow up of the ambient manifold along the submanifold.
\end{abstract}
		
		\thanks{}

		\maketitle

\section{Introduction}
The present work grows up from an effort for studying singularities given by blowing down zero sections of negative holomorphic vector bundles over compact complex manifolds.
We start our discussion by considering general modifications.

Modification of complex spaces, a concept originally proposed by Behnke and Stein in \cite{B-S51}, is a very important type of surgery in the study of bimeromorphic geometry of complex spaces.
The definition of modification is as follows.

\begin{defn}\label{def:modification}
A proper surjective holomorphic map $\pi:X\ra Y$ of complex spaces is called a modification if there are analytic subsets $A\subset X$ and $B\subset Y$
such that:
\begin{itemize}
\item[(1)] $B=\pi(A)$,
\item[(2)] $\pi|_{X\backslash A}:X\backslash A\ra Y\backslash B$ is biholomorphic,
\item[(3)] $A$ and $B$ are analytically rare, and
\item[(4)] $A$ and $B$ are minimal with the properties (1)-(3).
\end{itemize}
\end{defn}
We recall that an analytic set $A$ in a complex space $X$ is called analytically rare if for every open set $U\subset X$ the restriction map $\mo_X(U)\ra\mo_X(U\backslash A)$
is injective. If $X$ is reduced, this is equivalent to say that $A$ does not contain any irreducible component of $X$.
There are some general results about modifications.
For example, Grauert showed that a connected analytic subset $A$ in an irreducible complex space
can be "modified" to a point if and only if $A$ admits a strictly pseudoconvex neighborhood in $X$ \cite{Gr62},
and Hironaka showed that a modification can always be dominated by a locally finite sequence of blow-ups along analytic subsets \cite{Hor75}.
Please take a look at \cite{Pet94} for a very nice survey on modifications.

For a modification $\pi:(X,A)\ra (Y,B)$, we always refer to a modification of the form as in Definition \ref{def:modification}.

\begin{defn}\label{def:ismomorphic modification}
Two modifications $\pi:(X,A)\ra (Y,B)$ and $\pi':(X',A')\ra (Y,B)$ are called isomorphic if there is a biholomorphic map
$\sigma:(X, A)\ra (X', A')$ such that $\pi'\circ\sigma=\pi$.
\end{defn}

\begin{defn}\label{def:modification}
A modification $\pi:(X,A)\ra (Y,B)$ is called a \h{desingularization} of $(Y, B)$ if $X$ is regular;
and called an \h{irreducible desingularization} of $(Y, B)$ if $X$ is regular and $A$ is irreducible.
\end{defn}

In the present article, we are interested in the general problem of fundamental importance that,
for a given space $(Y, B)$, classifying all irreducible desingularizations $\pi:(X, A)\ra (Y,B)$ of $(Y,B)$ up to isomorphisms.
It seems that little is known about the above classification problem in the literatures.
As far as we know,  the classification problem is still open
even for the special case that $B$ is just an isolated regular point of $Y$.
In this situation,  of course one does not need to desingularize $Y$ along $B$,
so the classification problem for such $(Y,B)$ can be viewed as the ``kernel" of the above general classification problem.

In this work, we make some progress in this direction.
We will consider those cases that $B$ is a single point or a smooth submanifold.
The first result is about the uniqueness of the way for modifying a regular point in a complex space to an irreducible hypersurface.
Through out this paper, a hypersurface of a complex manifold always means an analytic subset of codimension 1.

\begin{thm}\label{thm: modiy a regular point}
Let $f:(X, E)\ra (\mc^n, 0)$ be an irreducible desingularization of $(\mc^n, 0)$.
We have the following two cases:
\bi
\item[(1)] If $E$ is a single point, then $f:(X, E)\ra (\mc^n, 0)$ is a biholomorhic map.
\item[(2)] If $\dim E>0$, then $f:(X, E)\ra (\mc^n, 0)$ is isomorphic to the modification
$$(\mathcal{O}_{\mathbb{P}^{n-1}}(-1),\mathbb{P}^{n-1})\ra (\mc^n,0),$$
which is the blow-up of $\mathbb{C}^n$ along the origin $\{0\}$.
\ei
\end{thm}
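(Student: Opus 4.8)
The plan is to analyze the structure of $f : (X, E) \to (\mathbb{C}^n, 0)$ using the fact that away from $E$ it is a biholomorphism onto $\mathbb{C}^n \setminus \{0\}$, together with Hartogs-type extension and normality. First I would handle case (1): if $E$ is a point, then $f$ is a proper bimeromorphic morphism between manifolds which is injective outside a point and whose fiber over $0$ is a single point; so $f$ is a finite bimeromorphic morphism, hence an isomorphism since $\mathbb{C}^n$ is normal (Zariski's main theorem, or directly: $f$ is proper with finite fibers, so finite, and a finite bimeromorphic map onto a normal space is biholomorphic).

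For case (2), the key point is to produce the blow-up map explicitly. Since $E$ is analytically rare and $X$ is a manifold, $E$ is a hypersurface (if $\operatorname{codim} E \geq 2$ then $X \setminus E \cong \mathbb{C}^n \setminus \{0\}$ would force, by Hartogs on $X$, that $f$ extends to a biholomorphism, contradicting $\dim E > 0$ together with properness of $f$ — the fiber $E = f^{-1}(0)$ cannot then be positive-dimensional). Actually more care is needed: I would argue that $E$ irreducible and $X \setminus E \cong \mathbb{C}^n \setminus \{0\}$ forces $E$ to be a \emph{connected compact} complex manifold of dimension $n-1$, because $f|_E : E \to \{0\}$ is proper, and $E$ has codimension one by the rareness/minimality condition in Definition 1.1(4) combined with the fact that a rare analytic set in a manifold that is the exceptional set of a modification to a normal point must be a divisor (otherwise it could be shrunk). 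Next, pulling back the $n$ coordinate functions $z_1, \dots, z_n$ on $\mathbb{C}^n$ gives holomorphic functions on $X$ vanishing precisely on $E$; near a point of $E$, writing $E = \{w = 0\}$ locally, each $f^* z_i = w^{k_i} u_i$ with $u_i$ a unit or identically-zero-free, and I expect to show all the vanishing orders $k_i$ are equal to $1$ and that the $n$-tuple $[f^* z_1 : \cdots : f^* z_n]$ defines a holomorphic map $X \to \mathbb{P}^{n-1}$. Together with $f$ itself, this gives a map $X \to \mathbb{C}^n \times \mathbb{P}^{n-1}$ landing in the blow-up $\operatorname{Bl}_0 \mathbb{C}^n = \mathcal{O}_{\mathbb{P}^{n-1}}(-1)$; one checks it is biholomorphic by comparing with the universal property of blow-up, or by checking it is bijective and a local isomorphism.

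The main obstacle I anticipate is showing that the vanishing order of each $f^* z_i$ along $E$ is exactly $1$ (equivalently, that $E$ is a \emph{reduced} Cartier divisor realizing the exceptional divisor of the blow-up, with $N_{E/X} = \mathcal{O}_{\mathbb{P}^{n-1}}(-1)$ after we identify $E \cong \mathbb{P}^{n-1}$). This requires ruling out "extra blow-ups": a priori $X$ could be a further modification of $\operatorname{Bl}_0\mathbb{C}^n$, but then $E$ would fail to be irreducible or $X \setminus E$ would not be all of $\mathbb{C}^n \setminus \{0\}$. I would extract this from minimality in Definition 1.1(4): any modification $(X,E) \to (\mathbb{C}^n, 0)$ with $E$ irreducible factors through $\operatorname{Bl}_0 \mathbb{C}^n$ by the universal property of blow-up applied to the ideal sheaf $f^{-1}(\mathfrak{m}_0)\cdot\mathcal{O}_X$ (Hironaka/Hartshorne), and the resulting map $X \to \operatorname{Bl}_0\mathbb{C}^n$ is again a modification with irreducible exceptional set; but $\operatorname{Bl}_0\mathbb{C}^n$ is already smooth, so this secondary modification is an isomorphism outside a set of codimension $\geq 2$ in a manifold and has irreducible exceptional fiber, whence — by the already-proved case (1) applied fiberwise, or by normality — it is a biholomorphism. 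I expect the cleanest route is to first establish $\operatorname{codim} E = 1$ and $E$ compact, then invoke the universal property of the blow-up to get a morphism $g : X \to \mathcal{O}_{\mathbb{P}^{n-1}}(-1)$ over $\mathbb{C}^n$, and finally show $g$ is finite and bimeromorphic, hence an isomorphism by normality of the total space of $\mathcal{O}_{\mathbb{P}^{n-1}}(-1)$.
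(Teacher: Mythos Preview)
Your outline for case~(1) is fine, and the general shape of case~(2) --- pull back the coordinates, form $[f_1:\cdots:f_n]$, and assemble a map $g:X\to\mathcal{O}_{\mathbb{P}^{n-1}}(-1)$ --- is exactly the right picture. The gap is the step you yourself flag as ``the main obstacle'' and then do not resolve: neither of your two suggested routes actually closes it.

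Concretely, write $m=\min_i\operatorname{ord}_E(f_i)$ and, near $x\in E$ with $\mathcal{I}_{E,x}=(h)$, set $f_i=h^{m}u_i$. The map $g=(f,[u_1:\cdots:u_n])$ into the blow-up is defined at $x$ precisely when some $u_i(x)\neq 0$, i.e.\ when $(f_1,\dots,f_n)_x=(h^{m})_x$ is principal. Equivalently, the universal property of the blow-up applies only where $f^{-1}\mathfrak{m}_0\cdot\mathcal{O}_X$ is invertible. The bad set $\Sigma=\{u_1=\cdots=u_n=0\}\subset E$ is a priori a non-empty proper analytic subset of $E$ (hence of codimension $\geq 2$ in $X$), and there is no general extension theorem that pushes a meromorphic map $X\dashrightarrow\mathbb{P}^{n-1}$ across a codimension-$2$ indeterminacy locus on a smooth space (think of Cremona maps). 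Your appeal to ``minimality in Definition~1.1(4)'' gives no mechanism for this, and ``case~(1) applied fiberwise'' has no meaning here. Even granting a global $g$, the further claim that $g$ is \emph{finite} is not automatic: one still has to exclude positive-dimensional fibres of $g|_E:E\to\mathbb{P}^{n-1}$.

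What the paper does is supply precisely the missing geometric input. For each complex line $L\subset\mathbb{C}^n$ through $0$, the strict transform $C_1=\overline{f^{-1}(L)\setminus E}$ is shown to meet $E$ in a single point $p_L$, with $f|_{C_1}$ an isomorphism onto $L$ near $p_L$; hence some $df_i(p_L)\neq 0$, so $m=1$, $u_i(p_L)\neq 0$, and in particular $p_L\notin\Sigma$. This simultaneously (a) shows $E$ is smooth at every $p_L$ (something your write-up tacitly assumes when you set $E=\{w=0\}$), (b) shows $g$ extends across $p_L$ with $g(p_L)=(0,[L])$, and (c) shows $g|_{E\setminus\Sigma}\to\mathbb{P}^{n-1}$ is \emph{surjective}. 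One then argues, using irreducibility of $E$ and the Jacobian of $g$, that $g:X\setminus\Sigma\to\mathcal{O}_{\mathbb{P}^{n-1}}(-1)$ is everywhere non-degenerate, hence injective (since it is injective off $E$ and a local isomorphism along $E\setminus\Sigma$), hence a biholomorphism. Then $E\setminus\Sigma=g^{-1}(\mathbb{P}^{n-1})$ is compact, forcing $\Sigma=\emptyset$ by irreducibility of $E$. In short, the paper does not prove invertibility of $f^{-1}\mathfrak{m}_0\cdot\mathcal{O}_X$ first and then build $g$; it builds $g$ on the open set $X\setminus\Sigma$, proves it is already a biholomorphism onto the whole blow-up, and concludes $\Sigma=\emptyset$ \emph{a posteriori}. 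Your proposal is missing exactly this line-by-line analysis, and without it the argument does not go through.
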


The assumption on the irreducibility of $E$ is obviously necessary,
since we can proceed blow-ups successively to get different modifications with reducible exceptional divisors.

The conclusion in Theorem \ref{thm: modiy a regular point} is of course local in nature.
So indeed the following result holds.
Let $X, Y$ be complex manifolds, with $E\subset X$ being an irreducible analytic subset of positive dimension and $y_0\in Y$.
If there exists a proper holomorphic map $f:X\ra Y$ such that $f(E)=\{y_0\}$ and $f:X\setminus E\ra Y\setminus\{y_0\}$ is biholomorphic,
then $f:(X,E)\ra (Y,y_0)$ is isomorphic to the blow-up of $Y$ along $y_0$.

Theorem \ref{thm: modiy a regular point} has several generalizations.
The first one is about uniqueness of the irreducible desingularization of an isolated cone singularity.
By definition, a cone $Y$ in $\mc^{n+1}$ is an analytic subset defined by a series of homogenous polynomials on $\mc^{n+1}$.
We denote by $Y_P$ the image $p(Y\backslash\{0\})$ of $Y\backslash\{0\}$ under the natural projection $p:\mc^{n+1}\backslash\{0\}\ra \mathbb P^{n}$ from $\mc^{n+1}\backslash\{0\}$
to the complex projective space $\mathbb P^n$ of dimension $n$.
Then it is obvious that $Y\backslash\{0\}$ is regular if and only if $Y_P$ is a submanifold of $\mathbb P^n$.

Recall that the blow up of $\mc^{n+1}$ at $0$ can be identified with the total space of the tautological line bundle $\mo_{\mathbb P^n}(-1)$
with the nature projective $\pi:\mo_{\mathbb P^n}(-1)\ra\mc^{n+1}$.
For an irreducible analytic subset $Z$ of $\mc^{n+1}$, the \emph{strict transform} of $Z$ is defined to be the minimal analytic subset $\tilde Z$
in $\mo_{\mathbb P^n}(-1)$ that contains $\pi^{-1}(Z\backslash\{0\})$.

Theorem \ref{thm: modiy a regular point} can be generalized as follows.

\begin{thm}\label{thm:cone sigularity}
With notations as above, let $f:(X,E)\ra (Y,0)$ be an irreducible desingularization of $(Y,0)$,
with $E\subset X$ being an irreducible hypersurface,
and $Y\subset\mc^{n+1}$ being a cone with $0$ as an isolated singularity,
then $f:(X,E)\ra (Y,0)$ is isomorphic to the modification $\pi:(\tilde Y, \pi^{-1}(0)\cap\tilde Y=Y_P)\ra (Y,0)$.
\end{thm}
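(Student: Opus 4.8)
The plan is to run the argument that proves Theorem \ref{thm: modiy a regular point}, with the scaling action of the cone $Y$ playing the role of the scaling action of $\mc^n$; the new point is that $Y\setminus\{0\}\to Y_P$ is a principal $\mc^\ast$-bundle whose associated line bundle is $\mo_{\p^n}(-1)|_{Y_P}$, that is, precisely the bundle whose total space is $\tilde Y$.

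First I would extend $f$ across $E$. The coordinate functions $z_0,\dots,z_n$ of $\mc^{n+1}$ restrict to $Y$ and pull back, along the biholomorphism $f\colon X\setminus E\to Y\setminus\{0\}$, to holomorphic functions on $X\setminus E$ that are locally bounded near $E$ (they tend to $0$ along $f^{-1}(0)$ and $f$ is proper). By the Riemann extension theorem they extend holomorphically to $X$, giving a proper holomorphic map $\bar f\colon X\to\mc^{n+1}$ that agrees with $f$, has image $Y$, and satisfies $\bar f^{-1}(0)=E$; in particular $E$ is compact. Next I would transport and extend the scaling action. Since $Y$ is a cone, the action $t\cdot(z_0,\dots,z_n)=(tz_0,\dots,tz_n)$ preserves $Y$, is free on $Y\setminus\{0\}$ with quotient $Y_P$, and extends to an action of the multiplicative monoid $(\mc,\,\cdot\,)$ for which $0$ collapses $Y$ to the origin. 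Conjugating by $f$ gives a holomorphic $\mc^\ast$-action on $X\setminus E$; on $\mc^\ast\times(X\setminus E)$ it equals $\mu(t,x)=\bar f^{-1}\!\left(t\cdot\bar f(x)\right)$, and since $t\cdot\bar f(x)$ extends holomorphically to $\mc\times X$ and $\bar f$ is proper, $\mu$ is locally bounded near the codimension-one analytic set $\left(\{0\}\times X\right)\cup\left(\mc\times E\right)$. A second use of Riemann extension then yields a holomorphic map $\mu\colon\mc\times X\to X$, which by density is a monoid action prolonging the $\mc^\ast$-action and has $\mu(0,\cdot)$ sending $X$ into $E=\bar f^{-1}(0)$. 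Being idempotent, $\rho:=\mu(0,\cdot)$ is a holomorphic retraction of $X$ onto the fixed-point set $E'\subseteq E$ of the action, a compact complex submanifold, and its restriction to $X\setminus E$ descends to a surjection $\bar\rho\colon Y_P\to E'$.

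The heart of the argument is then to show that $E'=E$, equivalently that $X$ is globally the total space of a line bundle over $E$ with $E$ as its zero section. Near a point of $E'$ I would use Bochner linearization to make the action linear on a neighbourhood, with integer weights on the tangent space. Because $\rho$ is defined on all of $X$, every point has a limit as $t\to0$, so there are no negative weights; because $\bar f$ is homogeneous of weight $1$ in the target while $\bar f^{-1}(0)=E$ has empty interior, only the weights $0$ and $1$ occur and $\bar f$ is linear in the weight-one directions; because $f$ is \emph{injective} off $E$, the weight-one subspace is one-dimensional, with coordinate $u$, say, so that $\bar f_i=c_i(w)\,u$ with $w$ parametrizing $E'$ locally; and because $E=\bar f^{-1}(0)$ is an irreducible hypersurface and the fibres of $\rho$ are connected (each orbit closure passes through the corresponding point of $E'$), this normal form forces $E$ to coincide with the fixed locus $\{u=0\}$ near that point. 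Hence $E'$ is open in $E$, so $E'=E$ by irreducibility, $\rho\colon X\to E$ is a holomorphic line bundle with $E$ as zero section, $\bar f$ is fibrewise linear, and the slopes $[c_0(\cdot):\dots:c_n(\cdot)]$ define a holomorphic map $E\to Y_P$ which, through $\bar\rho$, is a biholomorphism $E\xrightarrow{\ \sim\ }Y_P$.

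Finally I would identify everything. The restriction $\rho\colon X\setminus E\to E$ is a principal $\mc^\ast$-bundle which, through $f$, is $\mc^\ast$-equivariantly isomorphic over $E\cong Y_P$ to the principal bundle $Y\setminus\{0\}\to Y_P$; the latter is the restriction to $Y_P$ of $\mc^{n+1}\setminus\{0\}\to\p^n$, whose associated line bundle is $\mo_{\p^n}(-1)$. Hence the line bundle $\rho$ is isomorphic to $\mo_{\p^n}(-1)|_{Y_P}$, whose total space together with its zero section is $(\tilde Y,Y_P)$; adding back the zero sections yields a biholomorphism $(X,E)\to(\tilde Y,Y_P)$, and since $\bar f$ and $\pi$ are both the fibrewise-linear contraction determined by the corresponding point of $Y_P$ and agree with $f$ outside the exceptional sets, it intertwines $f$ with $\pi$. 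That is the required isomorphism of modifications. I expect the main obstacle to be the middle part: producing a genuine holomorphic $\mc^\ast$-action on $X$, rather than merely a family of self-maps, by means of properness of $\bar f$ and the Riemann extension theorem across $\mc\times E$, and then ruling out extra weights and higher-order jets of $\bar f$ along $E$ so that the fixed locus is exactly $E$ — which is where the irreducibility of $E$, its being a hypersurface, and the injectivity of $f$ off $E$ all enter essentially.
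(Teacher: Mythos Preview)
Your approach via the scaling $\mathbb{C}^\ast$-action is genuinely different from the paper's. The paper treats the cone case by the same hands-on method as Theorem~\ref{thm: modiy a regular point} (equivalently, as the $k=1$ instance of Theorem~\ref{thm:ample line bundles}): for each ruling $L\subset Y$ one forms the strict transform $C_1=\overline{f^{-1}(L)\setminus E}$, shows that $C_1\cap E=\{p_L\}$ is a single point at which $C_1$ is smooth and some $df_i$ does not vanish, uses the ratios $f_j/f_i$ to extend $\pi^{-1}\circ f$ holomorphically across $p_L$, and then checks that the resulting $F:X'\to\tilde Y$ is a surjective, non-degenerate, injective map with $X'=X$. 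No group action enters.

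There is, however, a real gap in your extension step, and you have correctly located where it lies. The claim that local boundedness of $\mu$ together with ``a second use of Riemann extension'' produces a holomorphic $\mu:\mathbb{C}\times X\to X$ is not valid: the removable-singularities theorem for bounded holomorphic maps applies when the target embeds in some $\mathbb{C}^N$, and $X$ certainly does not near the compact positive-dimensional set $E$. The map $z\mapsto 1/z\bmod\Lambda$ from $\mathbb{C}^\ast$ into a compact torus is bounded but does not extend across $0$; likewise $(z,w)\mapsto[z:w]$ from $\mathbb{C}^2\setminus\{0\}$ to $\mathbb{P}^1$ shows that codimension-two indeterminacy need not be removable for non-affine targets. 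Normality of $X$ does buy you that each $\nu_t$ extends off a codimension-two subset of $E$, i.e.\ is an isomorphism in codimension one, but in dimension $\geq 3$ that does not force it to be an automorphism. The difficulty is not incidental: lifting the $\mathbb{C}^\ast$-action to $X$ is tantamount to extending the direction map $X\setminus E\to Y_P$ across $E$, which is precisely the content of the theorem. The paper's curve-by-curve analysis supplies exactly that extension; your strategy needs an independent mechanism for it, and ``properness plus Riemann'' does not provide one.
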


We now discuss the relations of Theorem \ref{thm: modiy a regular point} and Theorem \ref{thm:cone sigularity} with the theory of ample vector bundles.
Let $\pi: F\ra M$ be a holomorphic verctor bundle over a compact complex manifold $M$.
Recall that $F$ is said to be ample (resp. very ample) if the line bundle $\mo_{\mathbb P(F^*)}(1)$ on the projectivization $\mathbb P(F^*)$ is ample (very ample),
where $F^*$ is the vector bundle dual to $F$.
We say that $F$ is \h{negative (in the sense of Grauert)} if $F^*$ is ample.
We can identify $M$ with the zero section of $F$ and then view $M$ as a submanifold of $F$.
By a remarkable result of Grauert in \cite{Gr62}, \h{$F$ is negative if and only if $M$ is exceptional in $F$},
that is, there is a modification $f:(F, M)\ra (Z, z_0)$, where $Z$ is a complex space and $z_0\in Z$ is a point. Such a modification is unique up to normalization.
In other words, $F$ is negative if and only if $M\subset F$ can be modified to a single point.
For convenience, we will call $(Z, z_0)$ the \h{Grauert blow down} of $F$.
In general $z_0$ is a singularity of $Z$.
Indeed, a consequence of Theorem \ref{thm: modiy a regular point} is the following result.

\begin{thm}\label{thm:l.b. blow down regular}
Let $\pi: F\ra M$ be a negative holomorphic vector bundle over a compact complex manifold $M$.
 Then the Grauert blow down $(Z, z_0)$ of $F$ is regular at the point $z_0$ if and only if $F$ is a line bundle,
 $M\cong \mathbb P^n$ for some $n$,
 and as holomorphic line bundles $\pi: F\ra M$ is isomorphic to the tautological line bundle $\mo_{\mathbb P^n}(-1)\ra \mathbb P^n$.
\end{thm}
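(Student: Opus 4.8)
The plan is to reduce the statement to the local form of Theorem~\ref{thm: modiy a regular point} recorded right after its statement. The ``if'' direction is classical, so I would dispose of it first: if $F\cong\mathcal{O}_{\mathbb{P}^n}(-1)$, the tautological projection $\mathcal{O}_{\mathbb{P}^n}(-1)\to\mathbb{C}^{n+1}$ contracts the zero section to the origin and is biholomorphic away from it, hence realizes the Grauert blow down, so $(Z,z_0)\cong(\mathbb{C}^{n+1},0)$ is regular (all spaces here are normal). For the converse I may assume $M$ is connected and $\dim M\geq 1$: if $\dim M=0$ then $F$ is a vector space over a point and there is no nontrivial blow down to discuss.

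For the ``only if'' direction, write $f:(F,M)\to (Z,z_0)$ for the Grauert blow down, with $M$ the (irreducible) zero section and $N:=\dim F=\operatorname{rank}F+\dim M$. Since $Z$ is smooth at $z_0$, I would pick an open neighborhood $U\ni z_0$ lying in the regular locus of $Z$ and biholomorphic to a ball $B\subset\mathbb{C}^N$ sending $z_0$ to $0$, and set $X':=f^{-1}(U)$. Then $X'$ is a complex manifold (an open neighborhood of $M$ in $F$), $f|_{X'}:X'\to U$ is proper because $f$ is, $f|_{X'}(M)=\{z_0\}$, and $f|_{X'}:X'\setminus M\to U\setminus\{z_0\}$ is biholomorphic. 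Thus $(X',M)\to(U,z_0)$ satisfies exactly the hypotheses of the local version of Theorem~\ref{thm: modiy a regular point}.

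Applying that result, $f|_{X'}:(X',M)\to(U,z_0)\cong(B,0)$ is isomorphic, as a modification, to the blow up of $B$ at $0$, which is an open neighborhood of the zero section $\mathbb{P}^{N-1}$ in $\mathcal{O}_{\mathbb{P}^{N-1}}(-1)$. Let $\sigma$ be this isomorphism of modifications. Since $\sigma$ commutes with the maps to $B$, the sources are biholomorphic over $B\setminus\{0\}$, and both contracted sets sit over $0$, the map $\sigma$ must send $M$ bijectively onto $\mathbb{P}^{N-1}$. In particular $M\cong\mathbb{P}^{N-1}$, so $\dim M=N-1$ and $\operatorname{rank}F=N-\dim M=1$; writing $n:=N-1$, we conclude that $F$ is a (negative) line bundle over $\mathbb{P}^n$.

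To pin down $F\cong\mathcal{O}_{\mathbb{P}^n}(-1)$ I would pass to normal bundles. The isomorphism $\sigma$ restricts to a biholomorphism between a neighborhood of $M$ in $F$ and a neighborhood of the zero section of $\mathcal{O}_{\mathbb{P}^n}(-1)$ carrying $M$ onto that zero section; differentiating along $M$ and quotienting by $TM$ yields an isomorphism $N_{M/F}\cong(\sigma|_M)^*N_{\mathbb{P}^n/\mathcal{O}_{\mathbb{P}^n}(-1)}$ covering the biholomorphism $\sigma|_M\in\operatorname{Aut}(\mathbb{P}^n)$. Now $N_{M/F}\cong F$ (the normal bundle of the zero section of a vector bundle is the bundle), $N_{\mathbb{P}^n/\mathcal{O}_{\mathbb{P}^n}(-1)}\cong\mathcal{O}_{\mathbb{P}^n}(-1)$, and automorphisms of $\mathbb{P}^n$ act trivially on $\operatorname{Pic}(\mathbb{P}^n)=\mathbb{Z}$ (they preserve $c_1$), so $F\cong\mathcal{O}_{\mathbb{P}^n}(-1)$. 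The point requiring care — and what I expect to be the main obstacle — is the bookkeeping in the middle steps: checking that restricting over a smooth ball around $z_0$ reproduces exactly the hypotheses of the local form of Theorem~\ref{thm: modiy a regular point}, and that the resulting isomorphism of modifications genuinely matches up the zero sections; once that is clean, the normal bundle identification is routine.
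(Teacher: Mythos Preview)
Your proof is correct and follows essentially the same route as the paper: invoke Theorem~\ref{thm: modiy a regular point} to identify the modification with the blow-up at a smooth point, then pass to normal bundles of the zero sections to pin down $F\cong\mathcal{O}_{\mathbb{P}^n}(-1)$. The only difference is cosmetic---the paper first establishes $\operatorname{rank}F=1$ via a separate Jacobian argument before applying Theorem~\ref{thm: modiy a regular point}, whereas you read it off afterward from $M\cong\mathbb{P}^{N-1}$---and your remark that $\operatorname{Aut}(\mathbb{P}^n)$ acts trivially on $\operatorname{Pic}(\mathbb{P}^n)$ makes explicit a step the paper leaves tacit.
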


The case of Theorem \ref{thm:l.b. blow down regular} that $M$ is a compact Riemann surface and $F^*$ is a very ample line bundle is due to Morrow and Rossi \cite{MR79}.

Let $L\ra M$ be a line bundle with $L^*$ being very ample,
and let $\phi:M\ra\mathbb P^N$ be the embedding of $M$ into the projective space $\mathbb P^N$
given by $L^*$, where $N=\text{dim}H^0(M,L^*)-1$.
Then the Grauert blow down $(Z, z_0)$ of $L$ is isomorphic to a cone in $\mc^{N+1}$ given by
$\pi^{-1}(\phi(M))\cup \{0\}$, up to normalization, where $\pi:\mc^{N+1}\setminus\{0\}\ra\mathbb P^N$ is the canonical projection.
With this observation, we can reformulate Theorem \ref{thm:cone sigularity} as follows:
any irreducible desingularization $f:(X,E)\ra (Z, z_0)$ with $E$ being a hypersurface
must be isomorphic to the Grauert blow down $(L, M)\ra (Z, z_0)$.

Now it is very natural to ask if the above result can be generalized to general negative line bundles.
It is surprising that the answer is affirmative.
The heart of the present work is the following theorem.

\begin{thm}\label{thm:ample line bundles}
Let $\pi:L\ra M$ be a negative line bundle over a compact complex manifold $M$, and let $(Z, z_0)$ be the Grauert blow down of $L$.
Then any irreducible desingularization $f:(X,E)\ra (Z, z_0)$ with $E$ being a hypersurface
must be isomorphic to the Grauert blow down $(L, M)\ra (Z, z_0)$ itself.
\end{thm}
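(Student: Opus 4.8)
The plan is to equip the singularity $(Z,z_0)$ with its canonical $\mathbb{C}^{*}$-action and then bootstrap from Theorem~\ref{thm:cone sigularity}. Write $g\colon(L,M)\ra(Z,z_0)$ for the Grauert blow down. Both $f$ and $g$ restrict to biholomorphisms over $Z\setminus\{z_0\}$, so $\phi:=g^{-1}\circ f$ is a biholomorphism from $X\setminus E$ onto $L\setminus M$. Since $f=g\circ\phi$ holds on the dense set $X\setminus E$, it suffices to extend $\phi$ to a biholomorphism $X\ra L$: any such extension automatically intertwines $f$ and $g$ and is therefore an isomorphism of modifications. Now $L\setminus M$ is the total space of the $\mathbb{C}^{*}$-bundle associated to $L$, with bundle projection $\pi\colon L\setminus M\ra M$, so through $\phi$ the manifold $X\setminus E$ carries a free $\mathbb{C}^{*}$-action with quotient $M$, realized by $\rho:=\pi\circ\phi\colon X\setminus E\ra M$.

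First I would produce the $\mathbb{C}^{*}$-action on $Z$ itself. The fibrewise scaling action of $\mathbb{C}^{*}$ on $L$ fixes $M$ pointwise, hence is compatible with the collapse $g$ and descends to a holomorphic $\mathbb{C}^{*}$-action on $Z$ fixing $z_0$; on $\mathcal{O}_{Z,z_0}$ it has only nonnegative weights, with weight $0$ only in the constants (the associated graded for the weight filtration being $\bigoplus_{j\ge 0}H^{0}(M,(L^{*})^{\otimes j})$), so $z_0$ is an attracting fixed point and $(Z,z_0)$ is the germ of a cone over $M$. Transporting this action through $\phi$, and using that $f$ is proper and equivariant and that $z_0$ is attracting, one extends it to a holomorphic $\mathbb{C}^{*}$-action on all of $X$ for which $f$ remains equivariant; its fixed locus is contained in $E=f^{-1}(z_0)$, since $z_0$ is the unique fixed point on $Z$.

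Next I would reduce to Theorem~\ref{thm:cone sigularity} by passing to a very ample power. Choose $d$ large enough that $(L^{*})^{\otimes d}$ is very ample and projectively normal, and let $(Z_{d},0_{d})$ be the Grauert blow down of $L^{\otimes d}$; then $(Z_{d},0_{d})$ is the germ of an honest cone in some $\mathbb{C}^{N+1}$ defined by homogeneous polynomials, with $0_{d}$ an isolated singularity because $M$ is a submanifold, and the natural finite map $Z\ra Z_{d}$ is the quotient by $\mu_{d}\subset\mathbb{C}^{*}$, \'etale of degree $d$ over $Z_{d}\setminus\{0_{d}\}$. Restricting the action on $X$ to $\mu_{d}$ and setting $X_{d}:=X/\mu_{d}$, one gets an induced map $f_{d}\colon(X_{d},E_{d})\ra(Z_{d},0_{d})$ which, once $X_{d}$ is seen to be a manifold, is an irreducible desingularization with $E_{d}$ a hypersurface; Theorem~\ref{thm:cone sigularity} then identifies $f_{d}$ with the Grauert blow down $(L^{\otimes d},M)\ra(Z_{d},0_{d})$, so that $X_{d}\cong L^{\otimes d}$. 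To finish, the finite map $X\ra X_{d}\cong L^{\otimes d}$ exhibits $X$ as the normalization of $L^{\otimes d}$ in the function field of $X\setminus E=L\setminus M$, and that normalization is $L$; the resulting biholomorphism $X\cong L$ restricts to $\phi$ on the dense set, hence is an isomorphism of the two modifications of $(Z,z_0)$.

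The main obstacle is the control of the $\mathbb{C}^{*}$-action on $X$ near $E$: one must show that $E$ is precisely the fixed locus and that $\mathbb{C}^{*}$ acts on the normal line bundle $N_{E/X}$ with weight $1$, for then the stabilizers of the $\mu_{d}$-action are generated by pseudoreflections and $X_{d}=X/\mu_{d}$ is again smooth — equivalently, a Bia{\l}ynicki--Birula linearization then directly presents $X$ as the total space of $N_{E/X}$ with $E$ as zero section, and the identification $X\setminus E\cong L\setminus M$ of $\mathbb{C}^{*}$-bundles over $M$ forces $N_{E/X}\cong L$ under an isomorphism $E\cong M$. A priori the fixed locus could be a proper analytic subset of the irreducible hypersurface $E$, the action on $E$ could be nontrivial, and $E$ need not be visibly smooth; excluding these is exactly where the local models supplied by Theorem~\ref{thm: modiy a regular point} and Theorem~\ref{thm:cone sigularity} are brought in, to pin down the normal geometry of $E$ in $X$. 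Once this is done, uniqueness of the equivariant collapse of the zero section — namely its Remmert reduction — shows that the biholomorphism $X\cong L$ intertwines $f$ and $g$, completing the proof.
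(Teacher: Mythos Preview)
Your outline has a genuine circular gap at the step you yourself flag as the main obstacle. To apply Theorem~\ref{thm:cone sigularity} to $f_d:(X_d,E_d)\to(Z_d,0_d)$ you need $X_d=X/\mu_d$ to be a manifold, which by Chevalley--Shephard--Todd requires the $\mu_d$-stabilizers on $X$ to be generated by pseudoreflections --- equivalently, that $E$ is exactly the fixed locus of the $\mathbb{C}^*$-action and the normal weight is $1$. You then propose to establish this by invoking Theorems~\ref{thm: modiy a regular point} and~\ref{thm:cone sigularity}, but those theorems only apply once you already have a smooth source mapping to $(\mathbb{C}^n,0)$ or to an affine cone, and the only such object in your argument is $X_d$ itself; nothing in them constrains the $\mathbb{C}^*$-action on $X$, since $(Z,z_0)$ is in general neither $(\mathbb{C}^n,0)$ nor an embedded cone. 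A secondary gap: extending the $\mathbb{C}^*$-action from $X\setminus E$ across the \emph{hypersurface} $E$ is not a consequence of properness and attractiveness of $z_0$ alone --- you would still have to control the graph closure and rule out indeterminacy, and then check that the individual extensions assemble into a holomorphic action.

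The paper's proof shares your first move of passing to a very ample power $L^k$, but never forms a quotient of $X$ and never needs any action to extend. Instead it composes $f$ with the $k$-th power map $\Phi_k:L\to L^k$, $v\mapsto v^{\otimes k}$, and the affine collapse $\psi_k:L^k\to Z_k\subset\mathbb{C}^N$, obtaining a holomorphic $g:X\to\mathbb{C}^N$ (which extends across $E$ by Riemann since the target is affine). The key invariant is $m=\min_j\mathrm{ord}_E(g^*z_j)$, and the heart of the argument is the two-sided estimate $m=k$: one gets $m\ge k$ by first extending $F=\psi^{-1}\circ f$ over a Zariski-open part of $E$ and factoring $g=\psi_k\circ\Phi_k\circ F$ through the $k$-th power, and $m\le k$ by analyzing the strict transform $C_p\subset X$ of each fiber $L_p$, on which $g$ is a $k$-sheeted branched cover of a line through $0$. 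The equality $m=k$ forces $G=\psi_k^{-1}\circ g$, and hence $F$, to extend over all of $E$, after which the endgame of Theorem~\ref{thm: modiy a regular point} applies. This vanishing-order computation is exactly the ``new insight'' the paper refers to, and it substitutes, in one concrete stroke, for all the structural control of the $\mathbb{C}^*$-action that your outline leaves open.
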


Theorem \ref{thm:ample line bundles} provides a new perspective about negative/ample line bundles:
\h{the total spaces of negative line bundles are the unique irreducible desingularizations of a special type of isolated singularities}.
It also naturally leads to the follow problems.

\begin{prob} \
\bi
\item[(1)] characterize isolated singularities that can be realized as Grauert blow down of negative line bundles;
\item[(2)] characterize isolated singularities that admit irreducible desingularizations, and classify their irreducible desingularizations;
\item[(3)] characterize isolated singularities that admit one and only one irreducible desingularizations (with exceptional set being hypersurfaces).
\ei
\end{prob}

Theorem \ref{thm:ample line bundles} can be generalized to holomorphic vector bundles.
Let $\pi:F\ra M$ be a holomorphic vector bundle of rank $r$ over a compact complex manifold $M$.
If $F$ is negative, the tautological line bundle $\mo_{\mathbb P(F^*)}(-1)$ over the projectivization
$\mathbb P(F^*)$ of $F^*$ is negative.
The total space of $\mo_{\mathbb P(F^*)}(-1)$ can be identified with the blow up of $F$ along the zero section $M$.
So the Grauert blow downs of $F$ and $\mo_{\mathbb P(F^*)}(-1)$ are isomorphic as complex spaces.
In connection with this and Theorem \ref{thm:ample line bundles}, we have the following

\begin{thm}\label{thm:ample vector bundles}
Let $f:(X,S)\ra (Z, z_0)$ be an irreducible desingularization of the Grauert blow down $(Z, z_0)$ of a negative line bundle $\pi:L\ra M$
over a compact complex manifold $M$, with $S$ being a smooth submanifold of $X$.
Then there exists a negative vector bundle $F\ra S$ such that $(Z, z_0)$ is isomorphic to the Grauert blow down of $F$
and  $f:(X,S)\ra (Z, z_0)$ is isomorphic to the Grauert blow down $(F, S)\ra (Z, z_0)$.
\end{thm}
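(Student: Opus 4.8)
The plan is to reduce the vector-bundle statement to the already-established line-bundle statement (Theorem \ref{thm:ample line bundles}) by blowing up $X$ along the submanifold $S$. First I would form the blow-up $\sigma:\widetilde X\ra X$ of $X$ along $S$, with exceptional divisor $\widetilde E=\sigma^{-1}(S)=\mathbb P(N_{S/X}^*)$, the projectivization of the conormal bundle. The composition $f\circ\sigma:(\widetilde X,\widetilde E)\ra (Z,z_0)$ is then a modification whose exceptional set $\widetilde E$ is an irreducible hypersurface in the regular space $\widetilde X$; hence $f\circ\sigma$ is an irreducible desingularization of $(Z,z_0)$ with exceptional hypersurface. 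By Theorem \ref{thm:ample line bundles}, $(Z,z_0)$ is the Grauert blow down of some negative line bundle, and this desingularization is isomorphic to that Grauert blow down $(L',M')\ra (Z,z_0)$. In particular $\widetilde X\cong L'$, the total space of a negative line bundle over the compact manifold $M'$, and under this isomorphism $\widetilde E$ corresponds to the zero section $M'$; so $M'\cong\widetilde E=\mathbb P(N_{S/X}^*)$.

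Next I would identify the line bundle structure. Under the isomorphism $\widetilde X\cong L'$, the exceptional divisor $\widetilde E\cong M'$ sits inside $\widetilde X$ with normal bundle $N_{\widetilde E/\widetilde X}\cong L'|_{M'}=L'$ (the zero section of a line bundle has normal bundle equal to the bundle itself). On the other hand, by the standard description of blow-ups, the normal bundle of the exceptional divisor of $\sigma:\widetilde X\ra X$ is the tautological bundle: $N_{\widetilde E/\widetilde X}\cong\mo_{\mathbb P(N_{S/X}^*)}(-1)$. Therefore the negative line bundle $L'$ over $M'=\mathbb P(N_{S/X}^*)$ is isomorphic to $\mo_{\mathbb P(N_{S/X}^*)}(-1)$. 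Writing $F:=N_{S/X}$, a holomorphic vector bundle of rank $r=\operatorname{codim}_X S$ over $S$, we conclude that $\widetilde X\cong\mo_{\mathbb P(F^*)}(-1)$ as complex manifolds, compatibly with the maps to $Z$. Since $\mo_{\mathbb P(F^*)}(-1)$ is negative, and a negative line bundle which is the tautological bundle of the projectivized dual of $F$ forces $F$ itself to be negative (ampleness of $\mo_{\mathbb P(F^*)}(1)$ is precisely ampleness of $F^*$), we obtain that $F=N_{S/X}$ is a negative vector bundle over $S$.

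Finally I would assemble the conclusion. We have $\widetilde X\cong\mo_{\mathbb P(F^*)}(-1)$, which is exactly the blow-up of the total space of $F$ along its zero section $S$; that is, $\sigma:\widetilde X\ra X$ and the blow-up $\math.o_{\mathbb P(F^*)}(-1)\ra F$ are both blow-ups of smooth ambient spaces along a common copy of $S$ with the same normal bundle, so by the uniqueness of blow-downs of an exceptional hypersurface to a submanifold — which is the content of the submanifold case asserted in the abstract, or can be extracted by running Theorem \ref{thm: modiy a regular point} fiberwise along $S$ — the blow-downs $X$ and $F$ are identified over $Z$, carrying $S\subset X$ to $S\subset F$ and $f$ to the Grauert blow down $(F,S)\ra (Z,z_0)$. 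Then $(Z,z_0)$ is the Grauert blow down of $F$ and $f$ is isomorphic to it, as claimed.

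The main obstacle I expect is the last step: one must know that the modification $X\ra Z$ is uniquely determined by its blow-up $\widetilde X\ra Z$, i.e.\ that blowing down an exceptional hypersurface onto a submanifold is unique. This is the "modifying a submanifold to a hypersurface" uniqueness advertised in the abstract; if it is already available as a cited or previously proved result it can be invoked directly, and otherwise it must be proved by working in a tubular neighborhood of $S$, trivializing locally, and applying the pointwise statement of Theorem \ref{thm: modiy a regular point} in each fiber together with a patching argument to globalize. A secondary technical point is checking that $\sigma:\widetilde X\ra X$ composed with $f$ really satisfies all four defining properties of a modification in Definition \ref{def:modification} — in particular the minimality clause (4) — but this is routine since $\widetilde E$ is an irreducible divisor lying over $z_0$ and $f\circ\sigma$ is biholomorphic away from it.
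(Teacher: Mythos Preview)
Your strategy matches the paper's almost exactly: blow up $X$ along $S$, apply Theorem~\ref{thm:ample line bundles} to the composite $f\circ\sigma$, then use the normal-bundle identification $N_{\widetilde E/\widetilde X}\cong\mathcal O_{\mathbb P(N_{S/X}^*)}(-1)$ to recognize $\widetilde X$ as the blow-up of $F:=N_{S/X}$ along its zero section. The paper carries out precisely these steps.

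The only substantive difference is in how the last step is closed. You frame it as a ``uniqueness of blow-downs of an exceptional hypersurface to a submanifold'' and point to Theorem~\ref{thm:modify subumanifolds}. But Theorem~\ref{thm:modify subumanifolds} gives uniqueness of the \emph{source} of a modification with fixed target $(X,S)$; what you need here is uniqueness of the \emph{target} given the source $(\widetilde X,\widetilde E)$, which is a different (and not generally true) statement. The paper sidesteps this entirely with a direct argument: since $\widetilde X$ is now known to be the total space of a line bundle over $\widetilde S$, one has literally $\widetilde X\cong N_{\widetilde S/\widetilde X}$ as complex manifolds with the identity on $\widetilde S$; and $N_{\widetilde S/\widetilde X}$ is by construction the blow-up of $N_{S/X}$ along $S$, again with the same exceptional divisor $\widetilde S=\mathbb P(N_{S/X}^*)$ and the same projection to $S$. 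Hence the two blow-down maps $\widetilde X\ra X$ and $\widetilde X\ra N_{S/X}$ agree on $\widetilde S$ fiberwise, so the biholomorphism $X\setminus S\cong N_{S/X}\setminus S$ extends across $S$ as the identity. This is a two-line argument and removes the obstacle you flagged; no appeal to Theorem~\ref{thm:modify subumanifolds} (proved later in the paper) is needed.
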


We do not know if the conclusion in Theorem \ref{thm:ample vector bundles} still holds
if $S$ is not assumed to be regular.

Theorem \ref{thm:ample line bundles} has the following interesting consequence.

\begin{thm}\label{thm:blow down determine l.b.}
Let $\pi:L\ra M$ and $\pi':L'\ra M'$ be two negative line bundles over compact complex manifolds.
Let $(Z, z_0)$ and $(Z', z'_0)$ be the Grauert blow downs of $L$ and $L'$ respectively.
If there exist neighborhoods $U$ of $z_0$ in $Z$ and $U'$ of $z'_0$ in $Z'$ such that $U\cong U'$ as complex spaces,
then $M\cong M'$, and $L\cong L'$ as holomorphic line bundles.
\end{thm}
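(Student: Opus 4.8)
The plan is to transport an open neighbourhood of the zero section of $L'$ across the given isomorphism of germs, recognise the result as an irreducible desingularization of $(Z,z_0)$, and apply Theorem~\ref{thm:ample line bundles}; the conclusions $M\cong M'$ and $L\cong L'$ then follow by restricting and differentiating the resulting isomorphism. First dispose of the regular case: if $z_0$ is a regular point of $Z$, then $Z$ is smooth everywhere (its only possible singularity being $z_0$), hence so is $U'\cong U$, so $z_0'$ is regular too, and Theorem~\ref{thm:l.b. blow down regular} gives $M\cong\mathbb P^{n}$, $L\cong\mo_{\mathbb P^{n}}(-1)$ and $M'\cong\mathbb P^{m}$, $L'\cong\mo_{\mathbb P^{m}}(-1)$; since a biholomorphism preserves dimension, $n+1=\dim U=\dim U'=m+1$, whence $n=m$ and the theorem holds in this case. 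Assume henceforth that $z_0$ is a singular point of $Z$; then $z_0'$ is a singular point of $Z'$, and any biholomorphism $\phi\colon U\to U'$ carries $z_0$ to $z_0'$, these being the unique singular points of $U$ and $U'$. Note moreover that $M$, and likewise $M'$, is connected, since otherwise its Grauert blow down could not be a single point; being compact connected manifolds, $M$ and $M'$ are therefore irreducible as analytic sets, each a smooth hypersurface in the total space of its bundle.

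Realise $(Z',z_0')$ by contracting a relatively compact strictly pseudoconvex neighbourhood $\Omega'$ of $M'$ in $L'$, with proper contraction $\pi'\colon\Omega'\to Z'$. Choose a neighbourhood $W'$ of $z_0'$ with $W'\subset U'$, set $V':=(\pi')^{-1}(W')$, and form
\[
g\ :=\ \phi^{-1}\circ\pi'|_{V'}\ \colon\ (V',M')\longrightarrow\bigl(\phi^{-1}(W'),\,z_0\bigr).
\]
Since $\pi'$ is proper and biholomorphic off $M'$, and $\phi$ is biholomorphic, $g$ is a modification; its total space $V'$ is smooth, its exceptional set $M'=g^{-1}(z_0)$ is a compact irreducible hypersurface, and $\phi^{-1}(W')$ is an open neighbourhood of $z_0$ in $Z$. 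Because the exceptional set of any irreducible desingularization of $(Z,z_0)$ is automatically compact, Theorem~\ref{thm:ample line bundles} involves $(Z,z_0)$ only through its germ at $z_0$; thus $g$ qualifies as an irreducible desingularization of $(Z,z_0)$ with hypersurface exceptional set in the sense of that theorem. Applying it, $g$ is isomorphic to the Grauert blow down $\pi\colon(L,M)\to(Z,z_0)$: there exist a neighbourhood $L_0$ of $M$ in $L$ and a biholomorphism $\sigma\colon(V',M')\to(L_0,M)$ with $\pi\circ\sigma=g$.

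Restricting $\sigma$ yields a biholomorphism $\iota:=\sigma|_{M'}\colon M'\to M$, so $M\cong M'$ as complex manifolds. For the line bundles, $\sigma$ is a biholomorphism of open subsets of the total spaces of $L'$ and $L$ carrying the zero section $M'$ onto the zero section $M$; its differential along $M'$ therefore induces an isomorphism of normal bundles $N_{M'/L'}\cong\iota^{*}N_{M/L}$ covering $\iota$. Since the normal bundle of the zero section of a vector bundle is canonically that bundle itself, $N_{M'/L'}\cong L'$ and $N_{M/L}\cong L$, so $L'\cong\iota^{*}L$; transported along the biholomorphism $\iota$, this is exactly the assertion that $L\cong L'$ as holomorphic line bundles.

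I expect the main obstacle to be the bookkeeping in the second paragraph: one must verify that, after shrinking $Z'$ to $W'$ and contracting along a relatively compact $\Omega'$, the composite $g$ satisfies all four axioms of a modification with $M'$ as its full exceptional set, and that ``irreducible desingularization of $(Z,z_0)$'' in Theorem~\ref{thm:ample line bundles} may legitimately be interpreted as a statement about the germ of $(Z,z_0)$ at $z_0$ — this is where the properness of the Grauert contraction is used. Everything following the invocation of Theorem~\ref{thm:ample line bundles}, including the normal-bundle identification, is routine.
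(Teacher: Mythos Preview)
Your proof is correct and follows essentially the same route as the paper's: apply Theorem~\ref{thm:ample line bundles} to identify the two Grauert blow-downs as irreducible desingularizations of the same germ, obtain a biholomorphism of neighbourhoods $(\Omega,M)\cong(\Omega',M')$, and then read off $M\cong M'$ and $L\cong L'$ via the normal-bundle identification. The paper's proof is a terse three-line version of exactly this argument; your separate treatment of the regular case via Theorem~\ref{thm:l.b. blow down regular} and your explicit check that $\phi(z_0)=z_0'$ are additional (and correct) details that the paper omits, as is your discussion of why Theorem~\ref{thm:ample line bundles} depends only on the germ of $(Z,z_0)$.
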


Let $X$ be a complex manifold and $S\subset X$ be a compact complex submanifold.
It is known that $S$ is exceptional in $X$ if its normal bundle $N_{X/S}$ of $S$ in $X$ is negative \cite{Gr62}.
With this assumption, one may wonder if the Grauert blow downs of $(X,S)$ and $N_{X/S}$ are isomorphic as complex spaces.
From Theorem \ref{thm:ample line bundles}, we can get the answer to this question as follows.

\begin{cor}\label{cor:blow down nbd and normal bundle equi}
Let $X$ be a complex manifold and $S\subset X$ be a compact smooth hypersurface
such that the normal bundle $N_{X/S}$ is negative.
Let $(Z,z_0)$ and $(Z',z_0')$ be the normal Grauert blow downs of $(X,S)$ and $(N_{X/S},S)$.
Then $z_0$ and $z_0'$ have isomorphic neighborhoods in $Z$ and $Z'$ if and only if
$S$ has isomorphic neighborhoods in $X$ and $N_{X/S}$.
\end{cor}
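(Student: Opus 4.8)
The plan is to reduce both implications to functoriality of Grauert's contraction together with the rigidity result of Theorem \ref{thm:ample line bundles}. The point to keep in mind is that, since $S$ is a hypersurface in $X$, the normal bundle $N:=N_{X/S}$ is a holomorphic line bundle over the compact complex manifold $S$, and it is negative by hypothesis; hence $(Z',z_0')$ is, by definition, the normal Grauert blow down of the negative line bundle $N\ra S$. Write $f:(X,S)\ra (Z,z_0)$ and $g:(N,S)\ra (Z',z_0')$ for the two normal Grauert contractions; each is proper, restricts to a biholomorphism over the complement of its exceptional point, and has $S$ as the fibre over that point. (We may assume $\dim S\ge 1$, the case $\dim S=0$ being trivial, so that $z_0$ and $z_0'$ have codimension $\ge 2$.)

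For the ``if'' direction I would use functoriality of the contraction. Let $\Phi:V\ra V'$ be a biholomorphism with $V$ a neighbourhood of $S$ in $X$, $V'$ a neighbourhood of $S$ in $N$, and $\Phi(S)=S$. Since $f$ is proper, $f(V)$ is an open neighbourhood of $z_0$ in $Z$ and $f|_V:V\ra f(V)$ is again a proper modification contracting $S$ to $z_0$; likewise $g|_{V'}:V'\ra g(V')$. Then $g|_{V'}\circ\Phi:V\ra g(V')$ is a proper modification contracting $S$ to the single point $z_0'$, so by uniqueness of the Grauert contraction up to normalization — everything here being normal and $z_0$, $z_0'$ of codimension $\ge 2$ — there is a biholomorphism $\bar\Phi:f(V)\ra g(V')$ with $\bar\Phi\circ f|_V=g|_{V'}\circ\Phi$, and in particular $\bar\Phi(z_0)=z_0'$. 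This exhibits isomorphic neighbourhoods of $z_0$ and $z_0'$.

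The substance is the ``only if'' direction. Suppose $\psi:W\ra W'$ is a biholomorphism of a neighbourhood of $z_0$ in $Z$ onto a neighbourhood of $z_0'$ in $Z'$ with $\psi(z_0)=z_0'$, and set $V=f^{-1}(W)$, a neighbourhood of $S$ in $X$. First I would promote the germ of desingularization $\psi\circ f:(V,S)\ra (W',z_0')$ to a \emph{global} irreducible desingularization of $(Z',z_0')$. Since $g$ identifies $N\setminus S$ with $Z'\setminus\{z_0'\}$ and $\psi\circ f$ carries $V\setminus S$ biholomorphically onto the open subset $W'\setminus\{z_0'\}$ of $Z'\setminus\{z_0'\}$, I glue the complex manifolds $V$ and $Z'\setminus\{z_0'\}$ along these two open subsets via $\psi\circ f$, obtaining a space $\hat X$. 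One checks that $\hat X$ is a connected complex manifold — Hausdorffness holds because, along any sequence in $V$ approaching $S$, the $(\psi\circ f)$-images eventually leave every compact subset of $Z'\setminus\{z_0'\}$ by properness of $f$ — and that $\psi\circ f$ on the $V$-chart and the inclusion on the other chart glue to a holomorphic map $\hat f:\hat X\ra Z'$. An exhaustion argument, using properness of $f$ near $z_0$ and of $g$ away from $z_0'$, shows $\hat f$ is proper; it is biholomorphic over $Z'\setminus\{z_0'\}$, one has $\hat f^{-1}(z_0')=S$, and $S$ is an irreducible hypersurface of $\hat X$ (being a hypersurface inside $V$, and connected). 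Thus $\hat f:(\hat X,S)\ra (Z',z_0')$ is an irreducible desingularization, with exceptional set a hypersurface, of the Grauert blow down of the negative line bundle $N\ra S$.

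Now Theorem \ref{thm:ample line bundles} applies and furnishes a biholomorphism $\sigma:(\hat X,S)\ra (N,S)$ with $g\circ\sigma=\hat f$, so in particular $\sigma(S)=S$. Restricting $\sigma$ over a small neighbourhood $W''\subset W'$ of $z_0'$ yields a biholomorphism $\hat f^{-1}(W'')\ra g^{-1}(W'')$ carrying $S$ to $S$. But $\hat f^{-1}(W'')=f^{-1}(\psi^{-1}(W''))$ is a neighbourhood of $S$ in $X$, while $g^{-1}(W'')$ is a neighbourhood of $S$ in $N=N_{X/S}$, so $S$ has isomorphic neighbourhoods in $X$ and $N_{X/S}$, which completes the argument. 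I expect the main obstacle to be the gluing construction and the verification that $\hat f$ is a genuine proper modification of the \emph{whole} of $Z'$ — that is exactly what makes the globally stated Theorem \ref{thm:ample line bundles} applicable — whereas the remaining checks are soft.
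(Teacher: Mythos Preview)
Your argument is correct, and the ``if'' direction matches the paper's, with one small omission: you assume $\Phi(S)=S$, whereas the paper pauses to justify it (since a Stein neighbourhood of $z_0'$ contains no positive-dimensional compact analytic subsets, $g(\Phi(S))$ must be the single point $z_0'$, forcing $\Phi(S)\subset S$ and hence equality by dimension). A similar remark applies to your assumption $\psi(z_0)=z_0'$, which is automatic when $z_0$ is singular and reduces to Theorem~\ref{thm:l.b. blow down regular} otherwise.

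In the ``only if'' direction your route is genuinely different from the paper's. The paper simply notes that the \emph{proof} of Theorem~\ref{thm:ample line bundles} is local around the singularity, and applies it directly to the desingularization $\psi\circ f:(f^{-1}(U),S)\ra (U',z_0')$ of the germ $(U',z_0')$. You instead treat Theorem~\ref{thm:ample line bundles} as a black box stated for the whole of $(Z',z_0')$, and manufacture a \emph{global} irreducible desingularization $\hat f:(\hat X,S)\ra (Z',z_0')$ by gluing $V=f^{-1}(W)$ to $Z'\setminus\{z_0'\}\cong N\setminus S$ along $V\setminus S$. Your approach buys logical hygiene --- no need to reopen the proof of Theorem~\ref{thm:ample line bundles} and check that nothing global was used --- at the cost of the gluing verifications (Hausdorffness, properness of $\hat f$), which you correctly flag as the main technical burden. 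The paper's approach is shorter but tacitly asks the reader to confirm that every step of \S\ref{sec:uniqueness for negative v.b.} goes through over a neighbourhood of $z_0'$.
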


Finally we generalize Theorem \ref{thm: modiy a regular point} to the uniqueness of
modifying a submanifold.

\begin{thm}\label{thm:modify subumanifolds}
Let $X$ be a complex manifold and $S\subset X$ be a complex submanifold.
Then any irreducible desingularization $f:(Y, E)\ra (X,S)$ of $(X,S)$ must be isomorphic to the
blow up of $X$ along $S$, provided that $E$ is a hypersurface.
\end{thm}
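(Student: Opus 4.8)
The assertion is local on $X$, and the isomorphism we are after is forced over $X\setminus S$ (both $f$ and the blow-up are the identity there), so the plan is to produce, for each chart of $X$, an isomorphism over that chart compatible with the one on $X\setminus S$, and then glue. By the minimality condition in the definition of modification together with the smoothness of $X$, the centre $S$ cannot be a hypersurface, so $\operatorname{codim}_X S=:c\ge 2$; fix a chart with $X=V\times\Delta^{c}$ and $S=V\times\{0\}$, where $V\subset\mathbb C^{n-c}$ is a polydisc, let $p\colon X\to V$ be the projection and $q:=p\circ f\colon Y\to V$. I would study the fibres $Y_{t}:=q^{-1}(t)=f^{-1}(\{t\}\times\Delta^{c})$, recognize the generic one as a blow-up of $\Delta^c$ at $0$ via Theorem \ref{thm: modiy a regular point}, and then spread this structure over the whole chart.

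First I would control the generic slice. Over $Y\setminus E\cong V\times(\Delta^{c}\setminus\{0\})$ the map $q$ is a projection, so $\operatorname{Crit}(q)\subset E$ and, by the proper mapping theorem and Sard's theorem, $\Sigma_{1}:=q(\operatorname{Crit}q)$ is a \emph{proper} closed analytic subset of $V$. Likewise, since $E$ is irreducible, the locus in $S\cong V$ over which the fibre of $f|_{E}\colon E\to S$ jumps in dimension, or fails to be irreducible of the generic dimension $c-1$, is contained in a proper closed analytic subset. Combining, there is a proper closed analytic $\Sigma\subset V$ such that for $t\in V^{\circ}:=V\setminus\Sigma$ the fibre $Y_{t}$ is a connected $c$-dimensional complex manifold, $q$ is a submersion near $Y_{t}$, and $Y_{t}\cap E=f^{-1}(t,0)$ is irreducible of dimension $c-1$. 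Then $f|_{Y_{t}}\colon(Y_{t},f^{-1}(t,0))\to(\Delta^{c},0)$ is an irreducible desingularization with positive-dimensional exceptional set, so by Theorem \ref{thm: modiy a regular point}(2), in the local form recorded just after it, $f|_{Y_t}$ is isomorphic over $\Delta^{c}$ to the blow-up $\operatorname{Bl}_{0}\Delta^{c}\to\Delta^{c}$; in particular $f^{-1}(t,0)\cong\mathbb P^{c-1}$ with normal bundle $\mathcal O_{\mathbb P^{c-1}}(-1)$ in $Y_{t}$.

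Next comes the passage from this fibrewise statement to an isomorphism $f^{-1}(X)\cong V\times\operatorname{Bl}_{0}\Delta^{c}$ over $X$. Write $\mathcal J:=f^{*}\mathcal I_{S}\cdot\mathcal O_{Y}$. Using the previous step, the local triviality of $q$ over $V^{\circ}$ (Ehresmann), and the fact that an automorphism of $\operatorname{Bl}_{0}\Delta^{c}$ over $\Delta^{c}$ restricting to the identity on $\Delta^{c}\setminus\{0\}$ is the identity (so the fibrewise isomorphisms are canonical and match up), one gets that $\mathcal J$ is invertible over $f^{-1}(V^{\circ}\times\Delta^{c})$, i.e.\ there $f$ coincides with the blow-up. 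The non-invertibility locus $Z$ of $\mathcal J$ is then a closed analytic subset of $Y$ contained in $f^{-1}(\Sigma\times\{0\})$; but $f^{-1}(\Sigma\times\{0\})$ is a \emph{proper} analytic subset of the irreducible hypersurface $E$ — it is not all of $E$ because $f(E)=S\not\subset\Sigma\times\{0\}$ — hence has codimension $\ge 2$ in $Y$. This is the one place where the irreducibility of $E$ is essential. Thus the canonical isomorphism $\sigma$, so far defined on $f^{-1}(X\setminus(\Sigma\times\{0\}))$, should extend across a codimension-$\ge 2$ analytic set on the $Y$-side, and its inverse across the corresponding codimension-$\ge 2$ set on the blow-up side; formalizing this (e.g.\ via the closure of the graph of $\sigma$ in $Y\times_{X}(V\times\operatorname{Bl}_{0}\Delta^{c})$, which is a modification of both factors, together with a Hartogs/Riemann-type extension) would give the isomorphism over $X$. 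Gluing over a cover of $X$ then finishes the proof.

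I expect the main obstacle to be precisely this last passage: showing that $f^{*}\mathcal I_{S}\cdot\mathcal O_{Y}$ is invertible on \emph{all} of $Y$, not merely over the generic part of $S$ — equivalently, that $f$ factors through and equals the blow-up globally, and that the graph closure of $\sigma$ contracts nothing over either space. One must promote a fibrewise isomorphism to an isomorphism of total spaces (navigating the usual torsion subtleties when restricting ideal sheaves to fibres) and rule out any pathology of $f$ over the ``bad'' analytic subset of $S$; the leverage for the latter is exactly that $E$, being irreducible, cannot contain a whole component lying over that subset, which forces the bad set to be thin in $Y$ and lets the extension arguments run.
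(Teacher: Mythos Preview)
Your strategy is genuinely different from the paper's, and the obstacle you flag in the last paragraph is a real gap that your sketch does not close. The paper does not slice by the $c$-dimensional fibres $Y_t=f^{-1}(\{t\}\times\Delta^c)$; it slices by \emph{one}-dimensional transversal curves. For every $x_0\in S$ and every curve $L$ through $x_0$ transverse to $S$, it sets $C_1:=\overline{f^{-1}(L)\setminus E}$ and shows, exactly as in the proof of Theorem~\ref{thm: modiy a regular point}, that $C_1\cap E=\{p_L\}$ is a single point, $(C_1,p_L)$ is smooth, and some $df_j(p_L)\neq 0$. The point is that this works for \emph{every} $x_0\in S$ with no genericity assumption, because the analysis of a one-dimensional $C_1$ needs only that $f$ is injective off $E$, not that any ambient fibre is smooth. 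Consequently $F:=\pi^{-1}\circ f$ extends to the open set $Y'=\{y:\text{some }df_j(y)\neq 0\}$, and $F(Y'\cap E)$ already contains every point $(x_0,[L])$ of $\tilde S\cong\mathbf P(N_{S/X})$; thus $F:Y'\to\tilde X$ is \emph{surjective}. From surjectivity the paper gets a global inverse $G:\tilde X\to Y'$, and continuity of $G$ then forces $Y'=Y$.

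Your $c$-dimensional slices, by contrast, are only usable for $t\in V^\circ$: over $\Sigma$ the fibre $Y_t$ may be singular and Theorem~\ref{thm: modiy a regular point} does not apply, so you learn nothing about $f^{-1}(\Sigma\times\{0\})$ and your $\sigma$ is not known to be surjective onto $\tilde X$. You are then trying to extend a holomorphic map into $\tilde X$ --- locally a map to $\mathbb P^{c-1}$ --- across a codimension-$\ge 2$ set, and such maps do \emph{not} extend in general (this is exactly the indeterminacy phenomenon). The graph-closure idea does not rescue this: the closure $\bar\Gamma\subset Y\times_X\tilde X$ need not be normal, and even after normalisation a proper modification of a smooth manifold whose exceptional image has codimension $\ge 2$ is not forced to be an isomorphism (the blow-up of any smooth codimension-$2$ centre already shows this). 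So to finish along your lines one would still need surjectivity of $\sigma$ over \emph{all} of $S$, which is precisely what the paper's curve argument supplies.

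One small remark: your passage from ``$f|_{Y_t}$ is the blow-up for $t\in V^\circ$'' to ``$\mathcal J=f^*\mathcal I_S\cdot\mathcal O_Y$ is invertible over $V^\circ$'' does not need Ehresmann. If $q$ is submersive at $y\in Y_t\cap E$ and $f|_{Y_t}$ is the blow-up, then some $d(g_j|_{Y_t})(y)\neq 0$, hence $dg_j(y)\neq 0$, so $\{g_j=0\}$ is a smooth hypersurface germ at $y$ containing $E$ and therefore equal to $(E,y)$; thus $g_j$ generates $\mathcal J_y$. This is the same pointwise mechanism the paper uses --- and it shows that the real content lies in getting such a $j$ at \emph{every} point of $E$, not just over $V^\circ$.
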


We do not know if the conclusion in Theorem \ref{thm:modify subumanifolds} still holds
if $S$ is assumed to be an irreducible analytic subset of $X$.

The results in the present work and their proofs are analytic in nature.
It seems interesting to establish the algebraic geometric counterparts of them.

The remaining of the paper is arranged as follows.
We give the proof of Theorem \ref{thm: modiy a regular point} and Theorem \ref{thm:l.b. blow down regular} in \S \ref{sec:modify point}.
Though Theorem \ref{thm: modiy a regular point} can be covered by Theorem \ref{thm:ample line bundles} or Theorem \ref{thm:modify subumanifolds},
we also include its proof for showing more clearly the basic idea in our arguments.
Theorem \ref{thm:cone sigularity} is covered by Theorem \ref{thm:ample line bundles} and can be proved in the same way as the argument of Theorem \ref{thm: modiy a regular point}.
In \S \ref{sec:uniqueness for negative v.b.} we give the proof of Theorem \ref{thm:ample line bundles}, Theorem \ref{thm:ample vector bundles},
and Corollary \ref{thm:blow down determine l.b.}.  The proof of Theorem \ref{thm:ample line bundles} requires new insight beyond the idea in the proof of  Theorem \ref{thm: modiy a regular point}.
In the final \S \ref{sec:modify submanifolds}, we give the proof of Theorem \ref{thm:modify subumanifolds}.

\subsection*{Acknowledgements}
This research is supported by National Key R\&D Program of China (No. 2021YFA1003100),
NSFC grants (No. 11871451, 12071310), and the Fundamental Research Funds for the Central Universities.

\section{Uniqueness of modifying a regular point}\label{sec:modify point}
This section is to prove Theorem \ref{thm: modiy a regular point} and its consequence Theorem \ref{thm:l.b. blow down regular}.

\begin{thm}[=Theorem \ref{thm: modiy a regular point}]\label{thm: modify a regular point text}
Let $f:(X, E)\ra (\mc^n, 0)$ be an irreducible desingularization of $(\mc^n, 0)$.
We have the following two cases:
\bi
\item[(1)] If $E$ is a single point, then $f:(X, E)\ra (\mc^n, 0)$ is a biholomorhic map.
\item[(2)] If $\dim A>0$, then $f:(X, E)\ra (\mc^n, 0)$ is isomorphic to the modification
$$(\mathcal{O}_{\mathbb{P}^{n-1}}(-1),\mathbb{P}^{n-1})\ra (\mc^n,0),$$
which is the blow-up of $\mathbb{C}^n$ along $\{0\}$.
\ei
\end{thm}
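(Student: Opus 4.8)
Here is the line I would take. The two cases are quite different in character, and the second reduces, after routine observations, to one factorization statement.

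\emph{Case \textup{(1)}.} Since $f$ is an isomorphism over $\mathbb C^n\setminus\{0\}$ and $f^{-1}(0)=E$ is a single point, $f$ is a proper holomorphic bijection between complex manifolds, hence a biholomorphism. (Equivalently: $f^{-1}(0)$ is $0$-dimensional, so by purity of the exceptional locus of a bimeromorphic morphism of complex manifolds the non-isomorphism locus of $f$ is empty, and a proper bijective local biholomorphism is a biholomorphism.)

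\emph{Case \textup{(2)}.} Now $E=f^{-1}(0)$ is compact (as $f$ is proper) and, by the same purity statement, pure of codimension one; being irreducible, it is an irreducible compact hypersurface. Put $w_i:=z_i\circ f\in\mathcal O_X(X)$, so $\{w_1=\cdots=w_n=0\}=E$ set-theoretically, and let $\pi:W:=\mathcal O_{\mathbb P^{n-1}}(-1)\to\mathbb C^n$ be the blow-up of the origin, with $\mathbb P^{n-1}=\pi^{-1}(0)$. I claim it suffices to produce a holomorphic map $g:X\to W$ with $\pi\circ g=f$. Indeed, such a $g$ is then proper (because $\pi$ is separated and $\pi g=f$ is proper), bimeromorphic (an isomorphism over $\mathbb C^n\setminus\{0\}$), satisfies $g^{-1}(\mathbb P^{n-1})=f^{-1}(0)=E$ and hence $g(E)=\mathbb P^{n-1}$; since $\dim X=\dim W=n$, the locus where $g$ is not a local biholomorphism is a hypersurface or empty, lies in the irreducible $E$, and is therefore $\emptyset$ or $E$; it cannot be $E$, for then its image $g(E)=\mathbb P^{n-1}$ would have codimension $\ge 2$ in $W$ by Zariski's main theorem, whereas it has codimension one; so $g$ is everywhere a local biholomorphism, and a proper bimeromorphic local biholomorphism of connected manifolds is a biholomorphism. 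Together with $\pi g=f$ and $g(E)=\mathbb P^{n-1}$ this gives the asserted isomorphism of modifications.

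By the universal property of the blow-up, the factorization $g$ exists precisely when the ideal sheaf $\mathcal J:=f^{-1}\mathfrak m_0\cdot\mathcal O_X=(w_1,\dots,w_n)$ is invertible, equivalently when the rational map $\Phi=[w_1:\cdots:w_n]:X\dashrightarrow\mathbb P^{n-1}$ has empty indeterminacy locus $Z$. Writing $m:=\min_i\operatorname{ord}_E(w_i)\ge 1$, one has $\mathcal J=\mathcal O_X(-mE)\cdot\mathcal J'$ with $Z=V(\mathcal J')\subset E$ avoiding the generic point of $E$, so $\operatorname{codim}_XZ\ge 2$. \textbf{Proving $Z=\emptyset$ is the crux.} I would attack it as follows: resolve the indeterminacy by a composite of blow-ups $p:\widehat X\to X$ along smooth centers over $Z$, so that $\Phi\circ p$ becomes a morphism $q:\widehat X\to W$; then $p,q$ are bimeromorphic morphisms of manifolds, $p$ is an isomorphism over $X\setminus Z$ with exceptional divisors $F_1,\dots,F_s$, and $s=0$ iff $Z=\emptyset$. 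Irreducibility of $E$ enters through the observation that $q$ must contract every $F_j$: otherwise $q(F_j)$ is a divisor supported over $0$, hence equals $\mathbb P^{n-1}$, and since the strict transform $\widetilde E$ of $E$ also maps bimeromorphically onto $\mathbb P^{n-1}$, two distinct divisors of $\widehat X$ would dominate $\mathbb P^{n-1}$ under $q$, contradicting (via Zariski's main theorem) that a general fibre of $q$ over the codimension-one set $\mathbb P^{n-1}$ is a single point. Hence $\operatorname{Exc}(q)=F_1\cup\dots\cup F_s=\operatorname{Exc}(p)$, and comparing
\[
K_{\widehat X}=p^*K_X+\textstyle\sum_j a_jF_j=q^*K_W+\textstyle\sum_j b_jF_j
\]
with $K_X=f^*K_{\mathbb C^n}+a_E E$, $K_W=\pi^*K_{\mathbb C^n}+(n-1)\mathbb P^{n-1}$, $p^*E=\widetilde E+\sum_je_jF_j$ and $\operatorname{ord}_{\widetilde E}(q^*\mathbb P^{n-1})=1$, the coefficient of $\widetilde E$ on both sides yields $a_E=n-1$. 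On the other hand a local computation near a smooth point of $E$ (coordinates $(u,t)$ with $E=\{u=0\}$ and $w_i=u^{m_i}g_i$, $u\nmid g_i$) gives
\[
a_E=\operatorname{ord}_E\operatorname{Jac}(f)=\Bigl(\textstyle\sum_i(m_i-1)\Bigr)+(n-1)+\operatorname{ord}_E\widetilde G\ \ge\ n-1,
\]
where $\widetilde G$ is a determinantal factor nonvanishing off $E$; equality forces $m_i=1$ for all $i$ and $\operatorname{ord}_E\widetilde G=0$, so (by irreducibility of $E$) $\widetilde G$ is nowhere zero, and its nonvanishing on $E_{\mathrm{reg}}$ says the $g_i$ have no common zero there, i.e.\ $Z\subset\operatorname{Sing}(E)$. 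Finally, over $E_{\mathrm{reg}}$ the map $g$ exists and, by the same computation, is a local biholomorphism carrying $E$ onto $\mathbb P^{n-1}$, so $E$ is smooth off $Z$; one then checks $\operatorname{Sing}(E)=\emptyset$, whence $Z=\emptyset$ and we are done. Theorem \ref{thm:cone sigularity} should follow by the same scheme, replacing $\mathbb C^n$, $\mathbb P^{n-1}$, $W$ by $Y$, $Y_P$, $\widetilde Y$.

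\textbf{Main obstacle.} The genuine difficulty is concentrated in this last point — showing the base/indeterminacy locus is empty (equivalently that $E$ is smooth and $w_1|_E,\dots,w_n|_E$ generate $\mathcal O_X(-E)|_E$). It is also exactly where regularity of $X$ is indispensable: it is what pins the discrepancy $a_E$ to $n-1$; dropping it, weighted blow-ups of $\mathbb C^n$ furnish modifications with irreducible exceptional divisor for which the conclusion fails. Disposing of $\operatorname{Sing}(E)$ cleanly — or proving $E$ smooth at the outset — is the part of the argument that will require the most care.
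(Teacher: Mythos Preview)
Your strategy---lift $f$ through the blow-up $\pi:W\to\mathbb C^n$ via its universal property, then argue the lift is an isomorphism---is sound, and the reduction ``once $g:X\to W$ exists it is automatically a biholomorphism'' is clean. But the route you take to show the indeterminacy locus $Z$ is empty is both different from the paper's and, as you yourself flag, incomplete. There is also one gap you did \emph{not} flag.

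The assertion ``the strict transform $\widetilde E$ of $E$ also maps bimeromorphically onto $\mathbb P^{n-1}$'' is exactly what needs proof: a priori $\widetilde E$ could be $q$-exceptional while some $F_{j_0}$ is the unique divisor dominating $\mathbb P^{n-1}$. This is repairable---compare the $F_{j_0}$-coefficients to get $a_E e_{j_0}+a_{j_0}=n-1$ with $e_{j_0},a_{j_0}\ge 1$, hence $a_E\le n-2$, while the $\widetilde E$-coefficient gives $a_E\ge (n-1)+1=n$, a contradiction---but as written your argument is circular at this point. Granting this, your discrepancy computation does yield $a_E=n-1$, hence all $m_i=1$ and $Z\subset\operatorname{Sing}(E)$. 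You are then still left with $\operatorname{Sing}(E)=\emptyset$, for which you give no argument beyond ``one then checks''.

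The paper's proof is more elementary and sidesteps discrepancies and resolution of indeterminacy entirely. Its key idea, absent from your sketch, is to look at the strict transform $C_1\subset X$ of each complex line $L\subset\mathbb C^n$ through $0$. One shows $C_1\cap E$ is a single point $p_L$, that $(C_1,p_L)$ is smooth, and that $f|_{C_1}:(C_1,p_L)\to(L,0)$ is a biholomorphism; consequently some $df_i(p_L)\neq 0$, which forces $E$ to be smooth at $p_L$ and the lift $F=\pi^{-1}\circ f$ to extend across $p_L$ with $F(p_L)=[L]$. Since every point of $\mathbb P^{n-1}$ arises as $[L]$, this single geometric observation simultaneously delivers the two things your argument lacks: surjectivity of $F$ restricted to $E\cap X'$ onto $\mathbb P^{n-1}$, and a supply of smooth points of $E$ sufficient to run the endgame (Sard plus irreducibility of $E$ gives $F$ non-degenerate on $X'$, hence injective, hence a biholomorphism, so $E\cap X'=F^{-1}(\mathbb P^{n-1})$ is compact and therefore equals $E$). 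In short, the transversal-curve trick replaces your whole discrepancy machinery and dissolves the ``main obstacle'' at the same stroke; it is also what the paper reuses, in modified form, for the negative-line-bundle and submanifold generalizations.
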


\begin{rem}
$\mathcal{O}_{\mathbb{P}^{n-1}}(-1)$ can be viewed as the submanifold of $\mathbb{C}^n\times\mathbb{P}^{n-1}$:
\begin{align*}
\{\big((z_1,\cdots,z_n),[w_1:\cdots:w_n]\big)\in\mathbb{C}^n\times\mathbb{P}^{n-1}:z_iw_j=z_jw_i,1\leq i,j\leq n\}.
\end{align*}
The blow-up $\pi:\mathcal{O}_{\mathbb{P}^{n-1}}(-1)\rightarrow\mathbb{C}^n$ is induced by the projection $\pi:\mathbb{C}^n\times\mathbb{P}^{n-1}\rightarrow \mathbb{C}^n$.
\end{rem}
\begin{proof}
If $E$ is a single point, $f^{-1}:\mathbb{C}^n\backslash\{0\}\rightarrow X\backslash E$ holomorphically extends to $0$,
this implies that $f:(X,E)\rightarrow(\mathbb{C}^n,0)$ is a biholomorphism.

If $\dim E>0$, we first observe that $\dim E=n-1$. Since $f:X\rightarrow \mc^n$ degenerates on $E$, we get that
\begin{align*}
J_f^{-1}(0)=\{x\in X:\mathrm{the}\ \mathrm{Jacobian}\ \mathrm{of}\ f \ \mathrm{vanishes}\ \mathrm{at} \ x\}
\end{align*}
is non-empty, therefore $\dim J_{f}^{-1}(0)=n-1$ and it must coincide with $E$.

We set $f_i:=f^*z_i$, $i=1,\cdots,n$, the pull-back of the coordinate functions on $\mc^n$. For $\xi=(\xi_1,\cdots,\xi_n)\in \mc^n\setminus\{0\}$, we set
\begin{align*}
L=L_{\xi}&:=\{t\xi:t\in\mathbb{C}\},\\
C_0&:=f^{-1}(L),\\
C_1&:=\overline{C_0\backslash E}.
\end{align*}
It is clear that $C_1$ is an analytic subset in $X$.
In fact, $C_1$ is the irreducible component of $C_0$ that contains $C_0\backslash E$.
We claim that $C_1\cap E=\{p_L\}$ is a single point.
We give the proof of this claim.
The properness of $f$ yields that $C_1\cap E $ is non-empty.
On the other hand, it is clear that $C_1\cap E $ contains only finitely many points since $C_1$ is of dimension 1.
We can take some $p\in C_1\cap E$.
Since $f$ is injective on $X\backslash E$,
$f$ is non-constant on any irreducible component of the germ $(C_1,p)$ of $C_1$ at p.
It follows that $f$ maps any irreducible component the germ $(C_1,p)$ onto some neighborhood of $0$ in $L$,
hence $(C_1,p)$ is irreducible and $f:(C_1,p)\rightarrow (L,0)$ is surjective.
If there exists another $q\in C_1\cap E$, then $f:(C_1,q)\rightarrow (L,0)$ is also surjective,
contradicting with the fact that $f$ is injective on $X\backslash E$.

We have seen that $(C_1,p_L)$ is irreducible,  so $f:(C_1,p_L)\rightarrow (L,0)$ is a bijection,
whose inverse is also holomorphic by the Riemann's removable singularity theorem.
It then follows that the analytic curve $C_1$ is regular.
Moreover, $f|_{C_1}$ is non-degenerate at $p_L$, i.e. $df|_{C_1}(p_L)\neq0$.
This implies that $df_1,\cdots, df_n$ can not vanish simultaneously at $p_L$.
We assume without loss of generality that $df_1(p_L)\neq 0$, then $(f_1^{-1}(0),p_L)$ is a smooth complex hypersurface germ of $X$.
Since $E=f_1^{-1}(0)\cap\cdots\cap f_n^{-1}(0)$, we can infer that $(E,p_L)=(f_1^{-1}(0),p_L)$, and in particular $E$ is regular at $p_L$.
Otherwise, for some $j>1$, $f_j$ does not vanish on $(f^{-1}_1(0),p_L)$.
This leads to $\dim(E,p_L)\leq n-2$, a contradiction.
As a result, $\frac{f_2}{f_1},\cdots,\frac{f_n}{f_1}$ can be viewed as holomorphic functions defined near $p_L$.

Consider the holomorphic map
\begin{align*}
F:=\pi^{-1}\circ f:X\backslash E &\rightarrow \mathcal{O}_{\mathbb{P}^{n-1}}(-1) \\
x\ \ &\mapsto \big(f_1(x),\cdots,f_n(x),[f_1(x):\cdots:f_n(x)]\big),
\end{align*}
we want to show that $F$ can be extended to a biholomorphic map from $X$ to $\mathcal{O}_{\mathbb{P}^{n-1}}(-1)$.
Near $p_L$, we have
$$F(x)=\big(f_1(x),\cdots,f_n(x),[1: \frac{f_2}{f_1}(x):\cdots:\frac{f_n}{f_1}(x)]\big).$$
It follows that $F$ holomorphically extends to some neighbourhood of $p_L$ in $X$,
and $F(p_L)=(0,[L])\in \mc^n\times\mathbb P^{n-1}$, where $[L]\in\mathbb P^{n-1}$ denotes the point corresponding to the line $L$.
For each $q\in E$, if some of $df_1(q),\cdots,df_n(q)$ does not vanish, then $F$ holomorphically extends to $q$ in the same way.

Let
$$X'=\{x\in X|df_1(x), \cdots, df_n(x)\ \text{do not vanish simultaneously}\},$$
then $X'\subset X$ is a Zariski open subset that contains $X\backslash E$, and $X'\cap E\subset E_{\text{reg}}$.
Note that $X'$ contains $p_L$ for all linear subspace $L\subset\mc^n$ of dimension 1,
it follows that $\mathbb P^{n-1}$, identified with the subset set $\{0\}\times \mathbb P^{n-1}$ in $\mathcal{O}_{\mathbb{P}^n}(-1)$,
lies in $F(X')$.
This implies that $F:X'\rightarrow \mathcal{O}_{\mathbb{P}^n}(-1)$ is surjective.

We move to prove that $X'=X$ and $F$ is biholomorphic.
We first show that $F$ is non-degenerate on $X'$.
Note that the analytic subset
\begin{align*}
J:=\{x\in X':\mathrm{the}\ \mathrm{Jacobian}\ \mathrm{of}\ F \ \mathrm{vanishes}\ \mathrm{at} \ x\},
\end{align*}
of $X'$ is either empty or has pure dimension $(n-1)$ in $X'$.
By assumption, $E$ is irreducible, it follows that $X'\cap E$ is also irreducible since it is  a Zariski open subset set of $E$.
Note that $J\subset X'\cap E$, so $J$ must equals to $X'\cap E$ if it is non-empty.
We have seen that $F$ maps $X'\cap E$ onto $\mathbb{P}^{n-1}$.
Note that $X'\cap E$ is regular as explained above,
by Sard's theorem, the differential of $F|_{X'\cap E}$ is non-degenerate at some point on $X'\cap E$, saying $p_0$.
It follows that  $F|_{X'\cap E}$ is injective near $p_0$, which leads to that $F$ is injective on $X'$ near $p_0$ since $F$ is injective on $X\backslash E$ and maps $X\backslash E$ to $\mathcal{O}_{\mathbb{P}^n}(-1)\backslash \mathbb P^n$.
Then we can deduce from a classical result in several complex variables (see e.g. \cite[Theorem 8.5]{FG02}) that $p_0\notin J$.
As a consequence, $J$ is empty and $F$ is non-degenerate on $X'$.

Now, we can prove that $F:X'\rightarrow \mathcal{O}_{\mathbb{P}^{n-1}}(-1)$ is injective.
Suppose to the contrary, for some $p_1\neq p_2\in X'\cap E$, $F(p_1)= F(p_2)=q\in \mathbb{P}^{n-1}$.
 Then there exists some neighbourhood $U_i$ of $p_i$, some neighbourhood $V$ of $q$, such that $F:U_i\rightarrow V$ is a biholomorphism.
 This contradicts with the fact that $F$ is injective on $X\backslash E$.

Up till now, we have proved that $F:X'\rightarrow \mathcal{O}_{\mathbb{P}^{n-1}}(-1)$ is a non-degenerate, holomorphic bijection, hence a biholomorphism.
It immediately follows that $X'\cap E=F^{-1}(\mathbb{P}^{n-1})$ is compact,
which implies that $E=X'\cap E$ since $E$ is irreducible.
We get $X=X'$ and $F:X\ra \mathcal{O}_{\mathbb{P}^{n-1}}(-1)$ is a biholomorphic map.
\end{proof}

\begin{rem}
If $X$ is not assumed to be smooth, Theorem \ref{thm: modify a regular point text} fails in general.
Let $X$ be the blow-up of $\mathbb{C}^2$ along the ideal $\mathcal{I}=(z,w^2)$. That is
\begin{align*}
X=\{\big((z,w),[u:v]\big)\in\mathbb{C}^2\times\mathbb{P}^1:vz=uw^2\}.
\end{align*}
It is easy to see that $X$ has an isolated singularity $\big((0,0),[1:0]\big)$,
the exceptional set $E$ is $\mathbb{P}^1$, and the holomorphic map $f:X\rightarrow \mathbb{C}^2$, induced by the projection $\mathbb{C}^2\times\mathbb{P}^1\rightarrow \mathbb{C}^2$, satisfies
\begin{itemize}
\item
$f(E)=\{0\}$,
\item
$f:X\setminus E\rightarrow \mathbb{C}^2\setminus\{0\}$ is biholomorphic.
\end{itemize}
\end{rem}
\begin{rem}
If $E$ is assumed to be smooth and the normal bundle of $E$ is negative,
then \cite[Theorem 5.5]{MR79} implies that $X$ is the blow-up of $\mathbb{C}^n$ along some ideal sheaf $\mathcal{I}$ that is supported at $0$.
\end{rem}

From Theorem \ref{thm: modify a regular point text}, we obtain the following result
\begin{thm}[=Theorem \ref{thm:l.b. blow down regular}]\label{thm:l.b. blow down regular text}
Let $\pi: F\ra M$ be a negative holomorphic vector bundle over a compact complex manifold $M$.
 Then the Grauert blow down $(Z, z_0)$ of $F$ is regular at the point $z_0$ if and only if $F$ is a line bundle,
 $M\cong \mathbb P^n$ for some $n$,
 and the line bundles $\pi: F\ra M$ is isomorphic to the tautological line bundle $\mo_{\mathbb P^n}(-1)\ra \mathbb P^n$.
\end{thm}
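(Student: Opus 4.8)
The plan is to reduce the theorem to Theorem \ref{thm: modify a regular point text}, or rather to its local form recorded in the paragraph after Theorem \ref{thm: modiy a regular point}, and then to upgrade the resulting isomorphism of germs to an isomorphism of line bundles by means of a normal bundle computation. Suppose first that $M\cong\mathbb{P}^n$ and $F\cong\mathcal{O}_{\mathbb{P}^n}(-1)$ as holomorphic line bundles. Then the total space of $F$ is biholomorphic to the blow-up of $\mathbb{C}^{n+1}$ at the origin, with the zero section $M$ corresponding to the exceptional divisor $\{0\}\times\mathbb{P}^n$; contracting the zero section recovers $(\mathbb{C}^{n+1},0)$, which is regular, and since $\mathbb{C}^{n+1}$ is normal it is the Grauert blow down $(Z,z_0)$. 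This settles one direction.

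Conversely, assume $z_0$ is a regular point of $Z$, and let $g\colon(F,M)\to(Z,z_0)$ be the Grauert blow down. Then $g$ is a modification, $F$ is a complex manifold, and the exceptional set is exactly the zero section $M$, which is connected, hence irreducible, and of positive dimension (a single point is never exceptional), and moreover $g^{-1}(z_0)=M$ and $g\colon F\setminus M\to Z\setminus\{z_0\}$ is biholomorphic. Since $z_0$ is regular, I would choose an open neighborhood $U$ of $z_0$ in $Z$ together with a biholomorphism $\psi\colon U\to B$ onto a ball $B\subset\mathbb{C}^N$, $N=\dim Z$, with $\psi(z_0)=0$. Restricting $g$ over $U$ yields a proper holomorphic map $g^{-1}(U)\to U$ that is biholomorphic away from $M$ and contracts the irreducible, positive-dimensional set $M$ to $z_0$. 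By the local form of Theorem \ref{thm: modiy a regular point} (applied, via $\psi$, to $g^{-1}(U)\to B$), the map $g|\colon(g^{-1}(U),M)\to(U,z_0)$ is isomorphic to the blow-up $\mathrm{Bl}_0(B)\to B$; the latter is the open subset of $\mathcal{O}_{\mathbb{P}^{N-1}}(-1)$ lying over $B$, with exceptional divisor equal to its zero section $\{0\}\times\mathbb{P}^{N-1}$.

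This produces a biholomorphism $\Phi\colon g^{-1}(U)\to\mathrm{Bl}_0(B)\subset\mathcal{O}_{\mathbb{P}^{N-1}}(-1)$ carrying $M$ biholomorphically onto $\{0\}\times\mathbb{P}^{N-1}$. In particular $M\cong\mathbb{P}^{N-1}$, so $\dim M=N-1$; comparing with $N=\dim Z=\dim F=\dim M+\operatorname{rank}F$ forces $\operatorname{rank}F=1$, so that $F$ is a line bundle. Furthermore $g^{-1}(U)$ is an open neighborhood of the zero section of $F$, $\mathrm{Bl}_0(B)$ is an open neighborhood of the zero section of $\mathcal{O}_{\mathbb{P}^{N-1}}(-1)$, and $\Phi$ identifies these two zero sections; differentiating $\Phi$ along $M$ and passing to the quotients by the tangent bundles of the zero sections gives an isomorphism of normal bundles covering $\Phi|_M\colon M\xrightarrow{\sim}\mathbb{P}^{N-1}$. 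Since the normal bundle of the zero section of a vector bundle $V$ is canonically isomorphic to $V$, this isomorphism reads $F\cong\mathcal{O}_{\mathbb{P}^{N-1}}(-1)$ as holomorphic line bundles, over the identification $M\cong\mathbb{P}^{N-1}$. Taking $n=N-1$ finishes the argument.

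The step I expect to require the most care is this last one: a biholomorphism between neighborhoods of the zero sections of two vector bundles that respects the zero sections need not be fiberwise linear, so it does not by itself yield an isomorphism of the bundles; the canonical identification of the normal bundle of a zero section with the bundle itself is precisely what converts the analytic statement into the bundle-theoretic conclusion. Everything else is a routine verification that restricting $g$ over a Euclidean neighborhood of $z_0$ preserves the hypotheses of (the local form of) Theorem \ref{thm: modiy a regular point} --- properness, biholomorphy off $M$, and the irreducibility and positivity of the dimension of $M$.
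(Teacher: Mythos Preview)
Your proof is correct and follows essentially the same route as the paper: invoke Theorem~\ref{thm: modiy a regular point} (in its local form) to identify the blow-down map with the blow-up of a smooth point, and then pass to normal bundles of the zero sections to upgrade the germ isomorphism to a line-bundle isomorphism. The only cosmetic difference is the order of deductions: the paper first shows $\operatorname{rank}F=1$ by the Jacobian/codimension argument ($J_f^{-1}(0)$ is a hypersurface contained in $M$) and then applies Theorem~\ref{thm: modiy a regular point}, whereas you apply the theorem directly and read off $\operatorname{rank}F=1$ afterward from $M\cong\mathbb{P}^{N-1}$ and $N=\dim M+\operatorname{rank}F$.
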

\begin{proof}
We first prove that the vector bundle $F$ has rank one. Let $f:(F,M)\ra (Z,z_0)$ be the blow-down map.
Noticing that $F$ and $Z$ are complex manifolds of the same dimension, we can infer that $J_{f}^{-1}(0)$,
the zero set of the Jacobian of $f$, is of codimension one. Since $f$ is non-degenerate on $F\backslash M$,
it follows that $J_f^{-1}(0)\subseteq M$. It is then clear that $M$ has codimension one in $F$, i.e. $F$ has rank one.

Now by Theorem \ref{thm: modiy a regular point}, $f:(F,M)\ra (Z,z_0)$ is isomorphic to the blow-up $(\tilde Z, E)\ra (Z, z_0)$ of $Z$ with center $\{z_0\}$,
where $E$ is the exceptional divisor that is isomorphic to $\mathbb P^n$.
We then get an isomorphism as line bundles  between the normal bundle $N_{M/F}\ra M$ of $M$ in $F$ and  the normal bundle $N_{E/\tilde Z}\ra E$ of $E$ in $\tilde Z$.
This implies the conclusion we want to prove since we can naturally identify as vector bundles $N_{M/F}\ra M$ with $F\ra M$,
and identify $N_{E/\tilde Z}\ra E$ with $\mo_{\mathbb P^n}(-1)\ra \mathbb P^n$.
\end{proof}

\section{Uniqueness of irreducible desingularizations of the Grauert blow down of negative vector bundles}\label{sec:uniqueness for negative v.b.}
We prove that the uniqueness theorem is still valid if $(Z,z_0)$ is obtained by blowing down the zero section of a negative line bundle.
\begin{thm}[=Theorem \ref{thm:ample vector bundles}]\label{thm:negative line bundles text}
Let $\pi:L\ra M$ be a negative line over a compact complex manifold $M$, and let $\psi:(L,M)\ra(Z, z_0)$ be the Grauert blow down of $L$.
Then any irreducible desingularization $f:(X,E)\ra (Z, z_0)$ with $E$ being a hypersurface
must be isomorphic to the Grauert blow down $\psi:(L, M)\ra (Z, z_0)$ itself.
\end{thm}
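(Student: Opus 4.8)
The plan is to show that the biholomorphism $G:=\psi^{-1}\circ f\colon X\setminus E\xrightarrow{\sim}L\setminus M$ extends to a biholomorphism $\bar G\colon X\to L$; once this is done, $\bar G(E)=M$ and $\psi\circ\bar G=f$ are automatic, so $\bar G$ is an isomorphism of modifications and we are done. Two preliminary remarks. First, $E=f^{-1}(z_0)$ and $M=\psi^{-1}(z_0)$ are compact by properness, so for a small Stein neighbourhood $U$ of $z_0$ the sets $f^{-1}(\overline U)$ and $\psi^{-1}(\overline U)$ are compact neighbourhoods of $E$ and $M$; consequently $G$ drags $E$ towards the zero section $M$ and is in particular bounded near $E$ (and symmetrically $G^{-1}$ is bounded near $M$), a fact we will repeatedly combine with Riemann's removable singularity theorem. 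Second, the ``vertical curves'' from the proof of Theorem \ref{thm: modify a regular point text} survive in this setting: for $m\in M$ let $\ell_m:=\psi(\pi^{-1}(m))\subset Z$ (the image of the fibre of $L$, a closed irreducible curve through $z_0$) and let $C^m\subset X$ be its strict transform, i.e. the closure of $f^{-1}(\ell_m\setminus\{z_0\})$. Exactly as in that proof, injectivity of $f$ off $E$ forces $C^m\cap E=\{p_m\}$ to be a single point with $(C^m,p_m)$ irreducible; and $\psi^{-1}\circ f$ restricted to $C^m\setminus\{p_m\}$ is bounded near $p_m$ and converges to $p_m$, so it extends to a biholomorphism $\Psi_m\colon C^m\xrightarrow{\sim}\pi^{-1}(m)$ carrying $p_m$ to the zero $0_m\in\pi^{-1}(m)$. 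Thus every $C^m$ is a smooth curve, the sets $C^m\setminus\{p_m\}$ partition $X\setminus E$, and $G^{-1}$ already extends holomorphically along each fibre of $L$, sending $0_m\mapsto p_m$.

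The heart of the proof — and the step that genuinely goes beyond the argument of Theorem \ref{thm: modify a regular point text} — is to promote this fibrewise extension to a holomorphic extension $\bar g\colon X\to M$ of $g:=\pi\circ G\colon X\setminus E\to M$, whose fibres will be the curves $C^m$. When $Z=\mathbb C^n$ this was automatic because the target was $\mathbb P^{n-1}$ and one had the homogeneous coordinates $f^*z_1,\dots,f^*z_n$; here $M$ is an arbitrary compact complex manifold and a holomorphic map into $M$ need not extend across a hypersurface. The idea is to manufacture homogeneous coordinates out of the ample bundle $L^*$: pick $k$ with $L^{*\otimes k}$ very ample, pick sections $s_0,\dots,s_N\in H^0(M,L^{*\otimes k})$ defining an embedding $\phi_k\colon M\hookrightarrow\mathbb P^N$, and regard each $s_i$ as a holomorphic function $\tilde s_i$ on the total space $L$, homogeneous of degree $k$ along the fibres. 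Each $\tilde s_i$ vanishes on the zero section $M$, hence lies in the maximal ideal of $\mathcal O_{Z,z_0}$ — recall that by Grauert's construction germs at $z_0$ are exactly the germs of holomorphic functions near $M$ in $L$ that are constant on $M$ — and therefore descends to a holomorphic function $\bar s_i$ near $z_0$ in $Z$ with $\bar s_i(z_0)=0$. Setting $h_i:=f^*\bar s_i$, these are holomorphic near $E$, vanish on $E$, and satisfy $[h_0:\cdots:h_N]=\phi_k\circ g$ on $X\setminus E$, because the $\tilde s_i$ compute $\phi_k\circ\pi$ on $L\setminus M$.

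It remains to check that $[h_0:\cdots:h_N]$ is holomorphic along $E$, i.e. that the $h_i$ all vanish along $E$ to one and the same order, after dividing out which one obtains $\bar g\colon X\to M$. Here the curves re-enter: restricting to $C^m$ and using $\Psi_m$, one finds $h_i|_{C^m}=c_i(m)\,\zeta_m^{\,k}$ in a coordinate $\zeta_m$ on $C^m\cong\mathbb C$ vanishing to first order at $p_m$, where the $c_i(m)$ are the homogeneous coordinates of $\phi_k(m)$ in a local trivialization and hence never vanish simultaneously; since a generic point of the irreducible hypersurface $E$ is smooth and equals $p_m$ for some $m$ with $C^m\pitchfork E$ there (transversality coming from $\Psi_m$ being a biholomorphism), comparing orders forces $\operatorname{ord}_E h_i=k$ for every $i$. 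Dividing by the $k$-th power of a local defining function of $E$ then makes $[h_0:\cdots:h_N]$ holomorphic near $E$ with image in $\phi_k(M)$, yielding the desired $\bar g\colon X\to M$ (equivalently, this says the graph closure of $g$ in $X\times M$ is finite, hence isomorphic, over $X$). I expect this order computation, together with the verification that the smooth points $p_m$ with $C^m\pitchfork E$ form a dense subset of $E$ — delicate precisely when $|L^*|$ has base points, so that some $\ell_m$ are genuinely singular at $z_0$ — to be the main obstacle of the whole proof; negativity of $L$ enters exactly here, through the very ampleness of a power of $L^*$.

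Once $\bar g$ is constructed the argument closes as at the end of the proof of Theorem \ref{thm: modify a regular point text}. The biholomorphism $G$ is a nowhere-zero holomorphic section of $\bar g^*L$ over $X\setminus E$, bounded near $E$, so Riemann's removable singularity theorem extends it to a holomorphic section $\sigma$ of $\bar g^*L$ over $X$ vanishing exactly on $E$; reading $\sigma$ as a map gives the extension $\bar G\colon X\to L$ of $G$. Finally $\bar G$ is proper, restricts to a biholomorphism off $E$, and maps $E$ into $M$ with $p_m\mapsto 0_m$, so $\bar G(E)=M$; applying Sard's theorem to $\bar G|_{E_{\mathrm{reg}}}$ (whose image is dense in $M$), together with the injectivity of $\bar G$ away from $E$ and the classical fact that a proper holomorphic map that is locally injective at a point is non-degenerate there (\cite[Theorem 8.5]{FG02}), one concludes exactly as in Theorem \ref{thm: modify a regular point text} that $\bar G$ is non-degenerate and injective, hence a biholomorphism; then $E=\bar G^{-1}(M)$ and $\psi\circ\bar G=f$, so $\bar G$ is the required isomorphism of modifications.
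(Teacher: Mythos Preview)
Your overall strategy matches the paper's: use very ampleness of $(L^*)^{\otimes k}$ to produce coordinate functions $h_i$ on $X$ that compute $\phi_k\circ g$ as $[h_0:\cdots:h_N]$ off $E$; use the strict transforms $C^m$ of the fibres of $L$; and close with the non-degeneracy/injectivity argument from Theorem~\ref{thm: modify a regular point text}. You also correctly identify the order computation as the heart of the matter. The gap is in how you propose to carry it out.

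Your assertion that ``transversality [of $C^m$ with $E$ at $p_m$] comes from $\Psi_m$ being a biholomorphism'' is not right: $\Psi_m$ being biholomorphic only says $C^m$ is a smooth curve, not that it meets the hypersurface $E$ transversally. Without transversality, the relation $h_i|_{C^m}=c_i(m)\,\zeta_m^{\,k}$ only yields $\mathrm{ord}_E h_i\le k$ (for those $i$ with $c_i(m)\ne0$), since the local defining function $h$ of $E$ could restrict to $C^m$ with $\mathrm{ord}_{p_m}(h|_{C^m})>1$. The reverse inequality $\mathrm{ord}_E h_i\ge k$, which you need so that dividing by $h^k$ gives holomorphic functions, is precisely what is missing, and it cannot be read off from the curves alone.

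The paper supplies this by reversing your order of operations. It introduces the $k$-th power map $\Phi_k\colon L\to L^k$, $v\mapsto v^{\otimes k}$, and first extends the composed map into $L^k$ over a Zariski-open subset $X'\subset X$, using only the \emph{minimum} order $m:=\min_i\mathrm{ord}_E h_i$ (this works wherever some $h_i/h^m$ does not vanish on $E$). It then lifts this to $F=\psi^{-1}\circ f\colon X'\to L$ via boundedness and Riemann --- essentially your section-of-$\bar g^*L$ argument, but run only over $X'$. With $F$ in hand on $X'$, the factorization $h_i=\tilde s_i\circ F$ together with the degree-$k$ homogeneity of $\tilde s_i$ along the fibres gives $\mathrm{ord}_E h_i\ge k$ immediately: in local fibre coordinates $\tilde s_i=c_i\,u^k$, and $u\circ F$ vanishes on $X'\cap E$ (since $\psi\circ F=f$ forces $F(X'\cap E)\subset M$), so $h_i$ is divisible by $h^k$. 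Only after this does the paper invoke the curves $C^m$, to get the reverse bound $m\le k$ and simultaneously to show that at each $p_m$ some $h_i/h^k$ is nonzero --- hence the extension is defined at every $p_m$, and surjectivity onto $M$ follows. (As a dividend this forces $\mathrm{ord}_{p_m}(h|_{C^m})=1$, i.e.\ the transversality you wanted --- but only \emph{a posteriori}.)

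So the fix to your sketch is: do not try to prove $\mathrm{ord}_E h_i=k$ before extending anything. First extend partially using the minimum order, then use the factorization through a section of $\bar g^*L$ (your own idea, applied on the Zariski-open set) to get the lower bound, and only then use the curves for the upper bound and the full extension.
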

We will use the following lemma
\begin{lem}\label{lem:vanishing order}\cite[Theorem 6.6, Chap II]{Dem}
Let $X$ be a complex manifold, $(A,x)$ be an analytic germ of pure codimension one and let $(A_j,x)$, $j=1,\cdots,M$ be its irreducible components. Then it holds that
\begin{itemize}
\item[(a)]
The ideal of $(A,x)$ is a principal ideal $\mathcal{I}_{A,x}=(h)$ where $h$ is the product of irreducible germs $h_j$ such that $\mathcal{I}_{A_j,x}=(h_j)$.
\item[(b)]
For every $f\in\mathcal{O}_{X,x}$, there is a unique decomposition $f=uh_1^{\alpha_1}\cdot...\cdot h_M^{\alpha_M}$ where $u$ is either invertible or a product of irreducible elements distinct from $h_j$'s, $\alpha_j$ is the vanishing order of $f$ at any point of $A_{j,\mathrm{reg}}\setminus\bigcup_{k\neq j}A_k$.
\end{itemize}
\end{lem}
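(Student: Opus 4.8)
The plan is to reduce both assertions to the algebraic structure of the local ring $\mo_{X,x}$, which, since $X$ is a complex manifold of dimension $n$, is isomorphic to the ring $\mo_n:=\mc\{z_1,\dots,z_n\}$ of convergent power series in $n$ variables. The single fact driving everything is the classical theorem of R\"uckert (a consequence of the Weierstrass preparation and division theorems) that $\mo_n$ is a Noetherian unique factorization domain. I would state this at the outset and then treat (a) and (b) as essentially formal consequences.

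For part (a), I would argue as follows. Each irreducible component $(A_j,x)$ has codimension one, so its ideal $\mathcal{I}_{A_j,x}\subset\mo_{X,x}$ is a prime ideal of height one (primeness being equivalent to irreducibility of $A_j$). In a unique factorization domain every height-one prime is principal, generated by an irreducible element; call it $h_j$, so $\mathcal{I}_{A_j,x}=(h_j)$. Since $A=\bigcup_j A_j$, the ideal of the union is, by definition, the intersection of the ideals of the components, so that
\[
\mathcal{I}_{A,x}=\bigcap_{j=1}^{M}\mathcal{I}_{A_j,x}=\bigcap_{j=1}^{M}(h_j).
\]
Because the components $A_j$ are pairwise distinct, the irreducible generators $h_j$ are pairwise non-associate, and in a UFD the intersection of the principal ideals generated by pairwise coprime elements equals the ideal generated by their product. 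Hence $\mathcal{I}_{A,x}=(h)$ with $h=h_1\cdots h_M$, which is exactly the asserted principal generator.

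For part (b), unique factorization in $\mo_{X,x}$ gives, for any $f$, a factorization into irreducibles that is unique up to units and reordering. Collecting the factors associate to each $h_j$ produces the exponent $\alpha_j\ge 0$, and gathering the remaining irreducible factors (none associate to any $h_j$) into a single element $u$ yields $f=u\,h_1^{\alpha_1}\cdots h_M^{\alpha_M}$; uniqueness of the $\alpha_j$ and of $u$ up to a unit is immediate from the UFD property. To identify $\alpha_j$ with the vanishing order, I would fix a point $p\in A_{j,\mathrm{reg}}\setminus\bigcup_{k\neq j}A_k$ and pass to local coordinates in which the smooth hypersurface $A_j$ is $\{w_1=0\}$; then $h_j$ equals $w_1$ times a unit, while each $h_k$ with $k\neq j$ is nonvanishing at $p$ (as $p\notin A_k$) and $u$ is nonvanishing at a generic such $p$ (since $u$ has no factor cutting out $A_j$, it cannot vanish identically on the dense smooth locus). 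Reading off the order of $f$ in $w_1$ gives precisely $\alpha_j$.

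The main obstacle is not in these formal deductions but in the input they rest on: the proof that $\mo_n$ is a unique factorization domain and that its height-one primes are principal. This is precisely the content of the Weierstrass/R\"uckert theory, and since the statement is quoted from \cite[Theorem 6.6, Chap II]{Dem}, I would either invoke that reference directly or sketch the Weierstrass-preparation reduction to the one-variable polynomial case. A minor secondary point requiring care is the well-definedness of the vanishing order $\alpha_j$ along $A_j$ --- that is, its independence of the chosen generic smooth point $p$ --- which follows from the irreducibility of $(A_j,x)$: the regular locus $A_{j,\mathrm{reg}}$ is connected, and the explicit global factorization near $x$ forces the same value of $\alpha_j$ at every such $p$.
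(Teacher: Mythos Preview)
The paper does not give its own proof of this lemma: it is quoted verbatim from \cite[Theorem~6.6, Chap~II]{Dem} and used as a black box. Your argument---reducing everything to the fact that $\mathcal{O}_{X,x}\cong\mathbb{C}\{z_1,\dots,z_n\}$ is a Noetherian UFD, so height-one primes are principal and the exponents $\alpha_j$ come from unique factorization, then identifying $\alpha_j$ with the local vanishing order at a generic smooth point of $A_j$---is correct and is exactly the standard proof (indeed the one in Demailly). There is nothing to compare beyond that; your write-up could serve as the missing justification the paper defers to the reference.
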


\begin{rem}\label{rem:local vanishing order}
Let $X$ be a complex manifold, $E$ be an irreducible hypersurface in $X$. For some $x\in E$, $\mathcal{I}_{E,x}=(h)$ and $f\in\mathcal{O}_{X,x}$, we define the order $\mathrm{ord}_{E,x}(f)$ of $f$ along $E$ at $x$ to be the largest integer $\alpha$ such that in the local ring $\mathcal{O}_{X,x}$,
\begin{align*}
f=h^{\alpha}\cdot v
\end{align*}
with $v\in \mathcal{O}_{X,x}$.
It is easy to see that for $g\in\mathcal{O}(X)$, $\mathrm{ord}_{E,x}(g)$ is independent of $x$. Indeed, let $g=uh_1^{\alpha_1}\cdot...\cdot h_M^{\alpha_M}$ be as in Lemma \ref{lem:vanishing order}, since $E_{\mathrm{reg}}$ is connected, we can infer that $\alpha_1=\cdots=\alpha_M=\alpha$ is a constant which is independent of the choice of $x$. The integer $\alpha$ is called the vanishing order $\mathrm{ord}_{E}(g)$ of $g$ along $E$.
\end{rem}

We now give the proof of Theorem \ref{thm:negative line bundles text}.
\begin{proof}
We split the proof into several steps.

\emph{Step 1.}
Taking $k$ large enough such that $(L^*)^k$ is very ample,
by embedding $M$ into the projective space $\mathbb P(H^0(X, L^{-k})^*)$ in the canonical way,
we can identify $M$ with a submanifold of some projective space $\mathbb{P}^{N-1}$ and identify $L^k$ with $\mathcal{O}_{\mathbb{P}^{N-1}}(-1)|_{M}$.

\emph{Step 2.}
As one of the key insights in the argument, we consider the map
\begin{align*}
\Phi_k:&L\rightarrow L^{k}\\
&v\mapsto v^{\otimes k},
\end{align*}
which is a proper $k$-sheeted branched covering with ramification locus $M$.

\emph{Step 3.}
Let $\psi_k:\mathcal{O}_{\mathbb{P}^{N-1}}(-1)\rightarrow\mathbb{C}^N$ be the canonical projection and $Z_k:=\psi_k(L^k)$.
Then $Z_k\subset\mc^N$ is a cone with the origin 0 as the only singularity.
We set
\begin{align*}
g:=\psi_k\circ\Phi_k \circ \psi^{-1}\circ f:X\setminus E\rightarrow Z_k.
\end{align*}
By the Riemann removable singularity theorem, $g$ holomorphically extends to $X$.
We then get the following commutative diagram:
\begin{displaymath}
\xymatrix{
L \ar[rrrr]^{\Phi_k}\ar[d]_{\psi} & & & & L^k \ar[d]^{\psi_k} \\
Z & & & & Z_k\\
& &X\ar[ull]^f \ar[urr]_g && }
\end{displaymath}
We denote by $(z_1,\cdots, z_N)$ the coordinates on $\mc^N$, and set
 $g_j:=g^*z_j$, $j=1,\cdots,N$, as the pull-back of the coordinate functions.
Let
\begin{align*}
m:=\min\{\mathrm{ord}_E(g_j):j=1,\cdots,N\}
\end{align*}
be the minimum of the vanishing orders of $g_1,\cdots, g_N$ along $E$,
which is well defined according to Remark \ref{rem:local vanishing order}.

\emph{Step 4.}
We consider the natural map
\begin{align*}
G:=\psi_k^{-1}\circ g :X\setminus E\rightarrow L^k,\ x\mapsto\Big(g_1(x),\cdots,g_N(x),[g_1(x):\cdots:g_N(x)]\Big).
\end{align*}
Our final aim is to show that the map $\psi^{-1}\circ f:X\backslash E\ra L\backslash M$ can be extended to a biholomorphic map from $X$ to $L$.
We first show that this map can be extended to a holomorphic map from $X$ to $L$.
This will be done by showing that $g$ can be extended to a holomorphic map from $X$ to $L^k$.
Given $x\in E$, assume $\mathcal{I}_{E,x}=(h)$ and $g_j=h^mk_j$ with $k_j\in\mathcal{O}_{X,x}$,
then $G$ extends holomorphically across $x$ if $k_j(x)\neq 0$ for some $1\leq j\leq N$.
We will show that this is always possible for any $x\in E$.
The key is to show that $m=k$.

\emph{Step 5.}
We show in this step that $m\geq k$.
Fix any $x\in E$, assume $\mathcal{I}_{E,x}=(h)$ and $g_j=h^mk_j$ with $k_j\in\mathcal{O}_{X,x}$, $1\leq j\leq N$.
By the definition of $m$, there exists some $k_{j_0}$ that does not vanish identically on $(E,x)$.
Thus we can find $x'\in E$ near $x$ such that $k_{j_0}(x')\neq 0$.
It follows that $G$ can extend holomorphically across $x'$ as given by
$$y\mapsto \left(g_1(y),\cdots, g_N(y), [k_1(y):\cdots:k_N(y)]\right)$$
for $y$ near $x'$.
By this way, we see that there exists a proper analytic subset $Z$ of $E$ such that $G$ holomorphically extends to $X':=X\setminus Z$.
By continuity and Riemann's removable singularity theorem, it follows that
\begin{align*}
F:=\psi^{-1}\circ f: X\setminus E\rightarrow L
\end{align*}
also holomorphically extends to $X'$,
and we still have $\Phi_k\circ F=G$ on $X'$.
Now we get the following communicative diagram.
\begin{displaymath}
\xymatrix{
L \ar[rrrr]^{\Phi_k}\ar[d]_{\psi} & & & & L^k \ar[d]^{\psi_k} \\
Z & & & & Z_k\\
& &X\ar[ull]^f \ar[urr]_g \ar@{.>}[uull]_F \ar@{.>}[uurr]^G && }
\end{displaymath}
From this we can see $m\geq k$ as follows.
Under a trivialization $U\times\mc_u$ and $U\times \mc_v$ of $L$ and $L^k$ respectively,
$\Phi_k$ can be represented as
$$U\times \mc\ra U\times \mc,\ (z,u)\ra (z,v)=(z, u^k).$$
From this it follows that the order of any $h_j:=\Phi_k\circ\psi_k\circ z_j$ along $M$ must be lager than or equal to $k$.
Note that $g_j=F\circ h_j$ on $X'$, so the order of $g_j$ on $X'\cap E$ is also lager than or equal to $k$,
which implies that $m\geq k$.

\emph{Step 6.}
We are going to prove that $m=k$ and $F:X'\rightarrow L$ is surjective.
For showing the surjection of $f$, it suffices to show that
$$F|_{X'\cap E}=G|_{X'\cap E}:X'\cap E\rightarrow M$$
is surjective.
Taking arbitrary $p\in M$, we set $Z_{k,p}:=\psi_k(L^k_p)$, $C_p:=\overline{f^{-1}(\psi(L_p))\setminus E}$,
where $L_p$ and $L^k_p$ denote the fibers of $L$ and $L^k$ at $p$ respectively.
By assumption,  $Z_{k,p}\subset \mc^N$ is a complex line through the origin.
Similar argument as in the proof of Theorem \ref{thm: modify a regular point text} shows that
\begin{itemize}
\item
$C_p\cap E=\{x\}$ for some $x\in E$ is a single point,
\item
$C_p$ is an analytic curve possibly with $x$ as an isolated singularity,
\item
$C_p$ is irreducible at $x$ and $g:C_p\rightarrow Z_{k,p}$ is a $k$-sheeted branched covering.
\end{itemize}

We are going to prove that $G$ extends holomorphically across $x$, and $G(x)=p$.
Let $\sigma:(\mathbb{C},0)\rightarrow (C_p,x)$ be a local parametrization of the germ $(C_p,x)$.
We observe that $g\circ \sigma:(\mathbb{C},0)\rightarrow (Z_{k,p},0)\cong(\mathbb{C},0)$ is a proper $k$-sheeted covering with ramification locus $0$.
So the vanishing order of $g\circ \sigma$ at 0 is $k$.
On the other hand, taking a power-series expansion, one can see that the vanishing order of  $g\circ \sigma$ at 0 is at least $m$.
It follows that $m\geq k$.
Combing with the estimate in the above step, we see $m=k$.
Moreover, if $\mathcal{I}_{E,x}=(h)$ and $g_j=h^kk_j$ with $k_j\in \mo_{X,x}$,
then $k_j(x)\neq 0$ for some $1\leq j\leq N$ (otherwise the vanishing order of $g\circ \sigma$ at 0 would be greater than $k$).
As explained in Step 5, this implies that $G$ extends holomorphically across $x$, and $G(x)=p$.
It follows $G:X'\ra L$ is surjective.

\emph{Step 7.}
The rest of the proof is identical with the last step in the proof of Theorem \ref{thm: modify a regular point text}, we sketch it here for the completeness.
The assumption that $E$ is irreducible implies that $F:X'\rightarrow L$ is non-degenerate everywhere.
The fact that $F$ is a biholomorphism outside $E$ then yields that $F:X'\rightarrow L$ is injective,
hence a biholomorphism. Finally, the compactness of $M$ leads to $X'\cap E=E$ and the proof is complete.
\end{proof}

We can generalize the above theorem to holomorphic vector bundles in certain sense.
\begin{thm}[=Theorem \ref{thm:ample vector bundles}]\label{thm:ample vector bundles text}
Let $f:(X,S)\ra (Z, z_0)$ be an irreducible desingularization of the Grauert blow down $(Z, z_0)$ of a negative line bundle $\pi:L\ra M$
over a compact complex manifold $M$, with $S$ being a smooth submanifold of $X$.
Then there exists a negative vector bundle $F\ra S$ such that $(Z, z_0)$ is isomorphic to the Grauert blow down of $F$
and  $f:(X,S)\ra (Z, z_0)$ is isomorphic to the Grauert blow down $(F, S)\ra (Z, z_0)$.
\end{thm}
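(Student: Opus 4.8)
The plan is to blow up $X$ along $S$ so as to turn the exceptional set into a hypersurface, apply Theorem~\ref{thm:negative line bundles text}, and then reconstruct $F$ from the line bundle structure that is thereby forced on the blow-up. We may assume $c:=\operatorname{codim}_X S\ge 2$, since for $c=1$ the manifold $S$ is itself a hypersurface and Theorem~\ref{thm:negative line bundles text} applies to $f$ directly, giving an isomorphism of $f:(X,S)\to(Z,z_0)$ with $\psi:(L,M)\to(Z,z_0)$ and hence realizing $(Z,z_0)$ as the Grauert blow-down of the negative line bundle $F:=N_{X/S}\cong N_{M/L}=L$. For $c\ge 2$, let $b:\widetilde{X}=\mathrm{Bl}_S X\to X$ be the blow-up of $X$ along $S$, with exceptional divisor $\widetilde{E}$, a $\mathbb{P}^{c-1}$-bundle $p:\widetilde{E}\to S$ (the projectivized normal bundle of $S$). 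Since $S$ is irreducible and smooth, $\widetilde{E}$ is a smooth connected, hence irreducible, hypersurface. The composite $\widetilde{f}:=f\circ b:\widetilde{X}\to Z$ is proper and surjective, sends $\widetilde{E}$ to $z_0$, restricts to a biholomorphism $\widetilde{X}\setminus\widetilde{E}\to Z\setminus\{z_0\}$, and is non-injective near every point of $\widetilde{E}$ because it collapses the positive-dimensional fibres of $p$ to $z_0$; so $\widetilde{E}$ is minimal, and $\widetilde{f}:(\widetilde{X},\widetilde{E})\to(Z,z_0)$ is an irreducible desingularization with hypersurface exceptional set. Theorem~\ref{thm:negative line bundles text} then yields a biholomorphism $\Theta:\widetilde{X}\xrightarrow{\ \sim\ }L$ with $\Theta(\widetilde{E})=M$ and $\psi\circ\Theta=\widetilde{f}$.

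Next I would transport structure through $\Theta$: conjugating the projection $\pi:L\to M$ by $\Theta$ (identifying $M$ with $\widetilde{E}$ via $\Theta|_{\widetilde{E}}$) presents $\widetilde{X}$ as the total space of a holomorphic line bundle over $\widetilde{E}$ with $\widetilde{E}$ as zero section; this line bundle must be $\mathcal{L}:=N_{\widetilde{E}/\widetilde{X}}$, and $\Theta$ identifies $\mathcal{L}$ with $\pi:L\to M$, so $\mathcal{L}$ is negative. On the other hand, $b$ being the blow-up of a smooth centre, $N_{\widetilde{E}/\widetilde{X}}$ is the relative tautological line bundle of $p$, hence restricts to $\mathcal{O}_{\mathbb{P}^{c-1}}(-1)$ on every fibre of $p$. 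Standard projective-bundle theory (explicitly, with $F:=(p_*\mathcal{L}^{*})^{*}$, a rank-$c$ bundle on $S$) then gives an $S$-isomorphism $\widetilde{E}\cong\mathbb{P}(F^{*})$ under which $\mathcal{L}$ becomes the tautological bundle $\mathcal{O}_{\mathbb{P}(F^{*})}(-1)$; and $F$ is negative, because negativity of a vector bundle is equivalent to negativity of the tautological line bundle on its projectivization while $\mathcal{O}_{\mathbb{P}(F^{*})}(-1)\cong\mathcal{L}\cong L$ is negative. By the identification recalled in the introduction, $\widetilde{X}$ — the total space of $\mathcal{O}_{\mathbb{P}(F^{*})}(-1)$ — is precisely the blow-up $\mathrm{Bl}_S F$ of the total space of $F$ along its zero section $S$ (we use the same letter $F$ for a bundle and its total space).

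It then remains to descend from $\widetilde{X}$ to $X$. We have two proper modifications of $\widetilde{X}$: the blow-up $b:\widetilde{X}\to X$ and the blow-up $\beta:\widetilde{X}=\mathrm{Bl}_S F\to F$; both are biholomorphic off $\widetilde{E}$ and collapse $\widetilde{E}$ exactly along $p:\widetilde{E}\to S$, so they have connected fibres inducing the same partition of $\widetilde{X}$. As $X$ and $F$ are smooth, hence normal, one gets $b_*\mathcal{O}_{\widetilde{X}}=\mathcal{O}_X$ and $\beta_*\mathcal{O}_{\widetilde{X}}=\mathcal{O}_F$, and consequently a biholomorphism $\Xi:X\xrightarrow{\ \sim\ }F$ with $\beta=\Xi\circ b$ that carries $S\subset X$ onto the zero section $S\subset F$. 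Finally $f\circ\Xi^{-1}:F\to Z$ is proper, surjective, biholomorphic off the zero section $S$, and sends $S$ to $z_0$; since $Z$ is normal and $S$ is exceptional in $F$ (Grauert's criterion \cite{Gr62}, $F$ being negative), the uniqueness of the Grauert blow-down identifies $(Z,z_0)$ with the Grauert blow-down of $F$ and exhibits $f:(X,S)\to(Z,z_0)$ as isomorphic, via $\Xi$, to the Grauert blow-down $(F,S)\to(Z,z_0)$.

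I expect the main obstacle to be this descent step: showing that un-blowing-up $\widetilde{X}=\mathrm{Bl}_S F$ gives back $X$ itself — not merely some complex space with the same fibre partition of $\widetilde{X}$ — and that the resulting identification intertwines $f$ with the Grauert blow-down of $F$. This is precisely where the normality of $X$ and of $F$, together with the principle that a proper surjection with connected fibres onto a normal space is recovered from the pushforward of its structure sheaf, do the work. A secondary point requiring care is the projective-bundle bookkeeping in the middle step: passing from the relative tautological line bundle on the projectivized normal bundle back to an honest vector bundle $F$ on $S$ and verifying, with the conventions of the introduction, that $F$ comes out negative.
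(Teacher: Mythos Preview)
Your proposal is correct and follows the same strategy as the paper: blow up $X$ along $S$, apply Theorem~\ref{thm:negative line bundles text} to the resulting hypersurface situation, and then descend. The only difference is that the paper bypasses your projective-bundle reconstruction by taking $F=N_{S/X}$ from the outset (noting that $\mathrm{Bl}_S N_{S/X}$ is the total space of $N_{\tilde E/\tilde X}$, which Theorem~\ref{thm:negative line bundles text} has already identified with $L$), and phrases the descent simply as ``the biholomorphism $X\setminus S\cong N_{S/X}\setminus S$ extends to $S$ identically''---which is your normality/same-fibre-partition argument in compressed form.
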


\begin{proof}
Let $\varphi:(\tilde{X},\tilde{S})\rightarrow(X,S)$ be the blow-up of $X$ along $S$,
then $f\circ \varphi:(\tilde{X},\tilde{S})\rightarrow (Z,z_0)$ is an irreducible desingularization of $Z$.
Theorem \ref{thm:negative line bundles text} implies that the following diagram communicates
\begin{align*}
\xymatrix{
(\tilde{X},\tilde{S}) \ar[r]^{\varphi}\ar[d]_{\cong}  &(X,S) \ar[d]_{f}\\
(L,M) \ar[r]^{\psi} & (Z,z_0)
}
\end{align*}
where $\psi:(L,M)\ra (Z, z_0)$ is the Grauert blow down map.
We will complete the proof by showing that $(X,S)\cong(N_{S/X},S)$. Indeed, the blow-up of $N_{S/X}$ along $S$ is isomorphic to $N_{\tilde{S}/\tilde{X}}$. Since $\tilde{X}$ is a line bundle over $\tilde{S}$, we can infer that $(\tilde{X},\tilde{S})\cong(N_{\tilde{S}/\tilde{X}},\tilde{S})$. From $X-S\cong\tilde{X}-\tilde{S}$ and $N_{S/X}-S \cong N_{\tilde{S}/\tilde{X}}-\tilde{S}$, we obtain the isomorphism $X-S\cong N_{S/X}- S$. It is easy to see that this isomorphism extends to $S$ identically and $(X,S)\cong (N_{S/X},S)$ follows as desired.
\end{proof}

Theorem \ref{thm:negative line bundles text} has following interesting corollary.

\begin{thm}[=Theorem \ref{thm:blow down determine l.b.}]\label{thm:blow down determine l.b. text}
Let $\pi:L\ra M$ and $\pi':L'\ra M'$ be two negative line bundles over compact complex manifolds.
Let $(Z, z_0)$ and $(Z', z'_0)$ be the Grauert blow downs of $L$ and $L'$ respectively.
If there exist neighborhoods $U$ of $z_0$ in $Z$ and $U'$ of $z'_0$ in $Z'$ such that $U\cong U'$ as complex spaces,
then $M\cong M'$, and $L\cong L'$ as holomorphic line bundles.
\end{thm}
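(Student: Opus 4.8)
The plan is to deduce the statement from the uniqueness Theorem~\ref{thm:negative line bundles text} by promoting the local isomorphism $h\colon U\to U'$ to a \emph{global} irreducible desingularization of $(Z',z_0')$. First I would dispose of the trivial case in which $z_0$ is a regular point of $Z$: after shrinking $U$ we may assume $U$ is a manifold, hence $U'\cong U$ is a manifold and $z_0'$ is also a regular point of $Z'$; Theorem~\ref{thm:l.b. blow down regular} then forces $(M,L)\cong(\mathbb{P}^n,\mo_{\mathbb{P}^n}(-1))$ and $(M',L')\cong(\mathbb{P}^m,\mo_{\mathbb{P}^m}(-1))$, and since $n+1=\dim Z=\dim U=\dim U'=\dim Z'=m+1$ we get $n=m$ and we are done. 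So assume from now on that $z_0$ is a singular point of $Z$. Because $\psi$ is biholomorphic over $Z\setminus\{z_0\}$, the space $Z$ is smooth away from $z_0$, so $z_0$ is the only singular point of $U$; likewise $z_0'$ is the only singular point of $U'$, and hence the isomorphism $h$ must satisfy $h(z_0)=z_0'$. (Note also that in this case $\dim M\geq 1$.)

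Next, set $W:=\psi^{-1}(U)\subseteq L$, an open neighbourhood of the zero section $M$ over which $\psi\colon W\setminus M\to U\setminus\{z_0\}$ is biholomorphic, and put $G:=h\circ\psi\colon W\to U'\subseteq Z'$; thus $G$ is holomorphic, $G^{-1}(z_0')=M$, and $G$ maps $W\setminus M$ biholomorphically onto $U'\setminus\{z_0'\}$. I would then form the glued space
$$\hat X:=(Z'\setminus\{z_0'\})\ \cup_{G}\ W,$$
identifying $W\setminus M$ with $U'\setminus\{z_0'\}\subseteq Z'\setminus\{z_0'\}$ along $G$. The one point that is not purely formal is Hausdorffness of $\hat X$: a sequence in $W\setminus M$ converging to a point of $M$ has its $G$-images converging to $z_0'\notin Z'\setminus\{z_0'\}$, so no two limits can fail to be separated, and one checks likewise that the graph of $G$ is closed; hence $\hat X$ is a connected complex manifold containing the compact irreducible hypersurface $M$. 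The map $\hat f\colon\hat X\to Z'$ that restricts to the identity on $Z'\setminus\{z_0'\}$ and to $G$ on $W$ is well defined, proper, surjective, biholomorphic over $Z'\setminus\{z_0'\}$, and contracts $M$ to $z_0'$; minimality of $M$ and $\{z_0'\}$ is immediate, so $\hat f\colon(\hat X,M)\to(Z',z_0')$ is an irreducible desingularization with $M$ a hypersurface. (Alternatively, one can skip the gluing by observing that the proof of Theorem~\ref{thm:negative line bundles text} only ever uses a neighbourhood of $z_0$ together with the compactness of $M$, hence applies verbatim in the germ setting.)

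Now Theorem~\ref{thm:negative line bundles text}, applied to $(Z',z_0')$, yields a biholomorphism $\sigma\colon\hat X\to L'$ with $\psi'\circ\sigma=\hat f$. Since an isomorphism of modifications matches the exceptional sets, $\sigma(M)=\psi'^{-1}(z_0')=M'$. Restricting $\sigma$ to $W\subseteq\hat X$ gives a biholomorphism $\Theta\colon W\to\Theta(W)$ onto an open neighbourhood of $M'$ in $L'$ with $\Theta(M)=M'$, so in particular $\tau:=\Theta|_M\colon M\to M'$ is a biholomorphism and $M\cong M'$. Differentiating $\Theta$ along $M$ produces an isomorphism of normal bundles $N_{M/L}\cong\tau^*N_{M'/L'}$ covering $\tau$; combined with the canonical identifications $N_{M/L}\cong L$ and $N_{M'/L'}\cong L'$ (the normal bundle of the zero section of a line bundle being the line bundle itself), this gives $L\cong\tau^*L'$, i.e. $L\cong L'$ as holomorphic line bundles.

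The main obstacle is precisely the first reduction: the hypothesis furnishes only an isomorphism of germs at the singular points, whereas Theorem~\ref{thm:negative line bundles text} is phrased for the global spaces, and the gluing above — together with the verification that the resulting object is a genuine complex manifold, i.e. Hausdorff — is what bridges this gap. Once that is in place, the properness, surjectivity and minimality of $\hat f$, as well as the normal bundle computation at the end, are all routine.
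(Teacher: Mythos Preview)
Your proposal is correct and follows essentially the same strategy as the paper: apply the uniqueness Theorem~\ref{thm:negative line bundles text} to obtain a biholomorphism between neighbourhoods of $M$ in $L$ and $M'$ in $L'$ sending $M$ to $M'$, and then read off the line-bundle isomorphism from the induced isomorphism of normal bundles $N_{M/L}\cong L$, $N_{M'/L'}\cong L'$. The only difference is that the paper's proof is terse and invokes Theorem~\ref{thm:negative line bundles text} directly in the germ setting (implicitly using that its proof is local near $z_0'$), whereas you carefully promote the local isomorphism to a global irreducible desingularization of $(Z',z_0')$ via the gluing $\hat X=(Z'\setminus\{z_0'\})\cup_G W$ and separately dispose of the regular case to ensure $h(z_0)=z_0'$; this extra care is not wrong, and in fact your parenthetical remark about the germ-level validity of Theorem~\ref{thm:negative line bundles text} is exactly the shortcut the paper takes.
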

\begin{proof}
Theorem \ref{thm:negative line bundles text} implies that
there is a biholomorphic map $f:(\Omega, M)\ra (\Omega', M')$ for some
neighborhoods $\Omega$ of $M$ in $L$ and $\Omega'$ of $M'$ in $L'$.
The map $f$ imduces as isomorphism of line bundles between the normal bundles $(N_{M/\Omega})$ and $N_{M'/\Omega'}$.
Then the disired conclusion follows since we can identify  $(N_{M/\Omega})$ with $L$, and indify $N_{M'/\Omega'}$ with $L'$.
\end{proof}

\begin{cor}[=Corollary \ref{cor:blow down nbd and normal bundle equi}]
Let $X$ be a complex manifold and $S\subset X$ be a compact complex submanifold
such that the normal bundle $N_{S/X}$ is negative.
Let $(Z,z_0)$ and $(Z',z_0')$ be the normal Grauert blow downs of $(X,S)$ and $(N_{S/X},S)$.
Then $z_0$ and $z_0'$ have isomorphic neighborhoods in $Z$ and $Z'$ if and only if
$S$ has isomorphic neighborhoods in $X$ and $N_{S/X}$.
\end{cor}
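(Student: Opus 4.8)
The plan is to derive this corollary from Theorem \ref{thm:ample vector bundles}, whose proof already contains all the substantive content; what remains is to assemble the pieces, using the observation recorded just before Theorem \ref{thm:ample vector bundles} that the Grauert blow down of a negative vector bundle coincides, as a complex space, with that of its associated negative tautological line bundle.

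First I would dispose of the implication ``isomorphic neighborhoods of $S$ $\Rightarrow$ isomorphic neighborhoods of $z_0$''. The Grauert blow down $(X,S)\to(Z,z_0)$ is a surgery supported near $S$: choosing a strongly pseudoconvex neighborhood $\Omega$ of $S$ in $X$ and contracting $S$ inside $\Omega$ produces a neighborhood of $z_0$ in $Z$, and by the uniqueness part of Grauert's criterion (together with normality, which is built into the statement) the germ $(Z,z_0)$ depends only on a neighborhood of $S$ in $X$; the same holds for $(N_{S/X},S)\to(Z',z_0')$. Thus a biholomorphism between a neighborhood of $S$ in $X$ and a neighborhood of $S$ in $N_{S/X}$ carrying $S$ to $S$, after shrinking to strongly pseudoconvex neighborhoods and contracting, yields an isomorphism of germs $(Z,z_0)\cong(Z',z_0')$.

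For the converse I would proceed as follows. Assume $z_0$ and $z_0'$ have isomorphic neighborhoods. Since $N_{S/X}$ is negative, the remarks preceding Theorem \ref{thm:ample vector bundles} identify $(Z',z_0')$, as a complex space, with the Grauert blow down of the negative line bundle $\mathcal{O}_{\mathbb{P}(N_{S/X}^*)}(-1)$ over $\mathbb{P}(N_{S/X}^*)$. Hence, via the given isomorphism of neighborhoods, $z_0$ has a neighborhood in $Z$ isomorphic to a neighborhood of the vertex of the Grauert blow down of a negative line bundle. On the other hand $f:(X,S)\to(Z,z_0)$ is an irreducible desingularization: $X$ is regular, and $S$, being contracted to the single point $z_0$, is a connected (hence irreducible) submanifold. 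Since every construction and extension argument in the proofs of Theorems \ref{thm:ample line bundles} and \ref{thm:ample vector bundles} takes place in an arbitrarily small neighborhood of the exceptional set, Theorem \ref{thm:ample vector bundles} applies and provides a negative vector bundle $F\to S$ such that $f:(X,S)\to(Z,z_0)$ is isomorphic to the Grauert blow down $(F,S)\to(Z,z_0)$; in particular $(X,S)\cong(F,S)$ as germs along $S$. Identifying $F$ with the normal bundle of its zero section gives $F\cong N_{S/F}\cong N_{S/X}$, and therefore $(X,S)\cong(F,S)=(N_{S/X},S)$, i.e. $S$ has isomorphic neighborhoods in $X$ and in $N_{S/X}$.

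The only delicate point — and the closest thing to an obstacle — is the passage between germs and global objects: the hypothesis furnishes only isomorphic neighborhoods of $z_0$ and $z_0'$, whereas Theorem \ref{thm:ample vector bundles} is stated for the genuine Grauert blow down of a negative line bundle over a compact manifold. This causes no real difficulty, since one may freely replace $Z$ and the auxiliary line bundle by appropriate neighborhoods throughout, the arguments being local around the exceptional set. (When $S$ is a hypersurface, $N_{S/X}$ is already a negative line bundle, and one invokes Theorem \ref{thm:ample line bundles} directly in place of Theorem \ref{thm:ample vector bundles}.)
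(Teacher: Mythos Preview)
Your proof is correct and follows essentially the same route as the paper's: for the nontrivial direction both you and the paper compose the desingularization with the given local isomorphism and invoke Theorem \ref{thm:ample vector bundles} (each acknowledging that the argument, though stated globally, is local near the exceptional set). One small point you gloss over in the easy direction: you assume the biholomorphism $\varphi$ of neighborhoods carries $S$ to $S$, whereas the paper actually checks this, observing that otherwise $f'\circ\varphi$ would map $S$ onto a positive-dimensional compact analytic subset of $Z'$ near $z_0'$, which is impossible.
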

\begin{proof}
Let $f:(X,S)\ra (Z,z_0)$ and $f':(N_{S/X},S)\ra (Z',z_0')$ be the Grauert blow downs.
If $\varphi:V\ra V'$ is an isomorphism of neighborhoods of $S$ in $X$ and $N_{S/X}$, we first observe that $\varphi(S)=S$.
Otherwise $f'\circ\varphi:V\ra Z'$ maps $S$ to a compact analytic subset of $Z'$,
which is not a set of isolated points, a contradiction. So we obtain a biholomorphism
$f'\circ\varphi \circ f^{-1}:f(V)\setminus\{z_0\}\ra f'(V')\setminus\{z_0'\}$.
The normality of $Z$ and $Z'$ ensures that $f'\circ\varphi \circ f^{-1}$ extends to an isomorphism of $f(V)$ and $f'(V')$.

On the other hand, if $\psi:U\ra U'$ is an isomorphism of neighborhoods of $z_0$ and $z_0'$ in $Z$ and $Z'$,
then $\psi\circ f:(f^{-1}(U),S)\ra (U',z_0')$ is an irreducible desingularization of $(U',z_0')$.
Theorem \ref{thm:ample vector bundles text} (also its proof) yields that $(f^{-1}(U),S)$ is isomorphic to $(V',S)$,
where $V'$ is some neighborhood of $S$ in $N_{S/X}$, this completes the proof.
\end{proof}

\section{Uniqueness of modifying submanifolds}\label{sec:modify submanifolds}
We consider the uniqueness of modifying a submanifold of a complex manifold.

\begin{thm}\label{thm:mod subumanifolds}
Let $X$ be a complex manifold and $S\subset X$ be a complex submanifold.
Then any irreducible desingularization $f:(Y, E)\ra (X,S)$ of $(X,S)$ must be isomorphic to the
blow up of $X$ along $S$, provided that $E$ is a hypersurface.
\end{thm}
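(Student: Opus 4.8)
The plan is to run the argument of the proof of Theorem~\ref{thm: modify a regular point text}, replacing $(\mc^n,0)$ by $(X,S)$ and $\mathcal{O}_{\mathbb{P}^{n-1}}(-1)$ by the blow-up $\beta\colon\widetilde X\to X$ of $X$ along $S$, whose exceptional divisor $E_0=\beta^{-1}(S)=\mathbb{P}(N_{S/X})$ has dimension $n-1=\dim X-1$, matching $\dim E$; the map $F:=\beta^{-1}\circ f$ is defined globally on $Y\setminus E$, so although normal coordinates are only available in charts, no genuine gluing issue arises. First I would observe that if $E=\emptyset$ then $S=\emptyset$ and $f$ is an isomorphism, so we may assume $E\neq\emptyset$; then $S=f(E)\neq\emptyset$, and since $f$ is a proper modification of the smooth (hence normal) space $X$, the submanifold $S$ has codimension $c\ge 2$, the hypersurface $E=f^{-1}(S)$ is pure of dimension $n-1$ (being irreducible), and every fibre of $f$ is connected.

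The analytic core adapts Steps~2--3 of the proof of Theorem~\ref{thm: modify a regular point text}. I would work in a chart $W=U\times\Delta\subset X$ with $S\cap W=U\times\{0\}$, coordinates $(w,z)$, and put $f_i:=f^*z_i$ on $f^{-1}(W)$. For $w_0\in U$ and $\xi\in\mc^c\setminus\{0\}$, consider the curve $L=L_{w_0,\xi}:=(\{w_0\}\times\mc\xi)\cap W$ and the union $C_1$ of the irreducible components of $f^{-1}(L)$ that meet $Y\setminus E$. The same argument as there (properness of $f$, a dimension count, and injectivity of $f$ off $E$) gives that $C_1\cap E=\{p_L\}$ is a single point, $(C_1,p_L)$ is irreducible, and $f\colon(C_1,p_L)\to(L,(w_0,0))$ is biholomorphic; hence $f_*(T_{p_L}C_1)=\mc\cdot(0,\xi)$ is not tangent to $S$, so some $df_i(p_L)\neq0$, say $df_1(p_L)\neq0$. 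Then $(\{f_1=0\},p_L)$ is a smooth hypersurface germ containing the pure $(n-1)$-dimensional germ $(E,p_L)=(f^{-1}(S),p_L)$, so the two germs coincide: $E$ is smooth at $p_L$ with local equation $f_1$, the functions $f_2,\dots,f_c$ are divisible by $f_1$, and $F\colon Y\setminus E\to\widetilde X\setminus E_0$, $y\mapsto\big(f(y),[f_1(y):\cdots:f_c(y)]\big)$, extends holomorphically across $p_L$ with $F(p_L)=((w_0,0),[\xi])$.

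I would then let $Y'\subset Y$ be the union of $Y\setminus E$ with the set of points of $E$ where $df_1,\dots,df_c$ do not all vanish; this locus is independent of the chart and of the choice of normal coordinates along $E$, $Y'\cap E\subset E_{\mathrm{reg}}$ is a connected $(n-1)$-dimensional submanifold, and $F$ extends to a holomorphic map $F\colon Y'\to\widetilde X$ which, by the computation $F(p_L)=((w_0,0),[\xi])$ together with surjectivity of $(w_0,\xi)\mapsto((w_0,0),[\xi])$ over each chart, maps $Y'\cap E$ onto $E_0$ and is therefore surjective. From here the endgame of the proof of Theorem~\ref{thm: modify a regular point text} applies: the Jacobian locus $J$ of $F$ is empty or equals the irreducible set $Y'\cap E$; the latter is impossible because Sard's theorem, applied to $F|_{Y'\cap E}\colon Y'\cap E\to E_0$ (both of dimension $n-1$), yields a point at which $F|_{Y'\cap E}$ is a local biholomorphism, whence $F$ is locally injective there (points of $Y\setminus E$ map off $E_0$, and $F$ is injective off $E$), contradicting degeneracy by \cite[Theorem 8.5]{FG02}. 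So $F$ is everywhere non-degenerate on $Y'$, and the same ``$E_0$ versus $\widetilde X\setminus E_0$'' argument makes $F\colon Y'\to\widetilde X$ injective, hence a biholomorphism onto $\widetilde X$.

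It remains to show $Y'=Y$, and this is the step I expect to be the main obstacle: when $S$ is non-compact so is $E_0=\mathbb{P}(N_{S/X})$, and the compactness argument used at the end of the proof of Theorem~\ref{thm: modify a regular point text} is unavailable. Instead I would invoke connectedness of the fibres of $f$: for $x\in S$ the set $f^{-1}(x)\cap Y'$ equals $F^{-1}(\beta^{-1}(x))$, which is biholomorphic to $\beta^{-1}(x)\cong\mathbb{P}^{c-1}$, hence compact and closed in $Y$; being also open and nonempty in the connected fibre $f^{-1}(x)$, it must be all of $f^{-1}(x)$. Thus $E=f^{-1}(S)\subset Y'$, so $Y'=Y$, and $F\colon Y\to\widetilde X$ is a biholomorphism with $\beta\circ F=f$, i.e.\ $f\colon(Y,E)\to(X,S)$ is isomorphic to the blow-up of $X$ along $S$. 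The only other delicate point is the local identification $(E,p_L)=(\{f_1=0\},p_L)$ and the resulting extension of $F$ across $p_L$, which is the analogue of the corresponding step in Theorem~\ref{thm: modify a regular point text} but now with a positive-dimensional $S$ and only local normal coordinates available.
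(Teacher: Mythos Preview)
Your proposal is correct and follows the paper's proof almost verbatim through the construction of $F$, the definition of the Zariski open set $Y'$, surjectivity of $F\colon Y'\to\widetilde X$, and the nondegeneracy/injectivity argument via \cite[Theorem 8.5]{FG02}. The one genuine difference is the final step showing $Y'=Y$: the paper argues by taking any $p_0\in E$, a sequence $p_k\in Y'\cap E$ converging to $p_0$, extracting a subsequence of $F(p_k)$ converging in the compact fibre $\mathbb{P}(N_{S/X})_{f(p_0)}$, and using continuity of the inverse $G=F^{-1}$ to conclude $p_0\in G(\widetilde S)=Y'\cap E$. Your argument instead uses that the fibres of a proper modification onto a normal space are connected: $f^{-1}(x)\cap Y'=F^{-1}(\beta^{-1}(x))\cong\mathbb{P}^{c-1}$ is compact, open, and nonempty in the connected set $f^{-1}(x)$, hence equals it. Both arguments are valid; yours is a bit cleaner and makes the role of normality of $X$ explicit, while the paper's avoids appealing to the connectedness-of-fibres theorem and stays more self-contained.
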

Let us first recall that, the blow-up of $X$ along a closed submanifold $S$ of codimension $s$ is a complex manifold $\tilde{X}$ together with a holomorphic map $\pi:\tilde{X}\rightarrow X$ such that
\begin{itemize}
\item
$\tilde{S}:=\pi^{-1}(S)$ is a smooth hypersurface in $\tilde{X}$ and the restriction $\pi:\tilde{S}\rightarrow S$ is isomorphic to the projectivized normal bundle $\textbf{P}(N_{S/X})\rightarrow S$,
\item
$\pi:\tilde{X}\setminus\tilde{S}\rightarrow X\setminus S$ is a biholomorphism.
\end{itemize}
For each $x_0\in S$, we take a neighbourhood $V$ with a coordinate chart $\tau(z)=(z_1,\cdots,z_n):V\rightarrow \mathbb{C}^n$ centered at $x_0$, such that $\tau(V)=\mathbb{B}^s\times \mathbb{B}^{n-s}$ for some balls $\mathbb{B}^s \subseteq \mathbb{C}^s$, $\mathbb{B}^{n-s}\subseteq\mathbb{C}^{n-s}$ and $S\cap V=\{z_1=\cdots=z_s=0\}$. Then $\frac{\partial}{\partial z_1}|_{S\cap V},\cdots,\frac{\partial}{\partial z_s}|_{S\cap V}$ form a holomorphic frame of $N_{S/X}|_{S\cap V}$, and we denote by $(\xi_1,\cdots,\xi_s)$ the corresponding coordinate along fibers of $N_{S/X}$. Thus $(\xi_1,\cdots,\xi_s,z_{s+1},\cdots,z_n)$ is a coordinate chart on $N_{S/X}|_{S\cap V}$. For each $j=1,\cdots,s$, we set
\begin{align*}
\tilde{U}_j:=\{z\in V\setminus S:z_j\neq0\}\cup\{(z,[\xi])\in\textbf{P}(N_{S/X})|_{S\cap V}:\xi_j\neq0\}
\end{align*}
Then $\tilde{U}_j$ forms an open covering of $\pi^{-1}(U)$. Moreover, the holomorphic coordinate $\tilde{\tau}_j:\tilde{U}_j\rightarrow \mathbb{C}^n$ is given by
\begin{align*}
\tilde{\tau}_j(z)&=\Big(\frac{z_1}{z_j},\cdots,\frac{z_{j-1}}{z_j},z_j,\frac{z_{j+1}}{z_j},\cdots,\frac{z_s}{z_j},z_{s+1},\cdots,z_n \Big),\ z\in V\setminus S,\\
\tilde{\tau}_j(z,[\xi])&=\Big(\frac{\xi_1}{\xi_j},\cdots,\frac{\xi_{j-1}}{\xi_j},0,\frac{\xi_{j+1}}{\xi_j},\cdots,\frac{\xi_s}{\xi_j},z_{s+1},\cdots,z_n\Big),\ (z,[\xi])\in \textbf{P}(N_{S/X})|_{S\cap V}.
\end{align*}

\begin{proof}
The basic idea is similar to the argument of Theorem \ref{thm: modify a regular point text}.
For each $x_0\in S$, we can take a local coordinate neighbourhood $(V,z_1,\cdots,z_n)$ as above
and denote $f_j:=f^*z_j$, $j=1,\cdots,n$. Those are holomorphic functions on $U:=f^{-1}(V)$ such that $E\cap U=\{f_1=\cdots=f_s=0\}$.
Let $L$ be an analytic curve passing $x_0$ and intersecting $S$ transversally.
In local coordinate, we can choose $L$ to be the image of
\begin{align*}
\gamma:\mathbb{B}^1 &\rightarrow \mathbb{B}^{s}\times\mathbb{B}^{n-s},\\
 t \ &\mapsto \gamma(t)=(\gamma_1(t),\cdots,\gamma_s(t),0,\cdots,0),
\end{align*}
where $\gamma_1(0)=\cdots=\gamma_s(0)=0$, and $\frac{\partial\gamma_j}{\partial t}(0)\neq0$ for some $1\leq j\leq s$.
Let $C_0:=f^{-1}(L)$ and $C_1:=\overline{C_0\setminus E}$.
Using the same argument in Theorem \ref{thm: modify a regular point text}, we obtain that for some $p_L\in E$,
\begin{itemize}
\item
$C_1\cap E=\{p_L\}$,
\item
$(C_1,p_L)$ is regular,
\item
$f|_{C_1}$ is non-degenerate at $p_L$, i.e. $df|_{C_1}(p_L)\neq0$.
\end{itemize}
We may assume that $df_1(p_L)\neq0$, so $(f_1^{-1}(0),p_L)$ is a smooth hypersurface germ.
Since $(E,p_L)=(f_1^{-1}(0)\cap\cdots\cap f_s^{-1}(0),p_L)$,
we infer that $(E,p_L)=(f_1^{-1}(0),p_L)$ and in particular $E$ is regular at $p_L$,
otherwise for some $j>1$, $f_j^{-1}(0)$ does not vanish on $(f_1^{-1}(0),p_L)$. This leads to $\dim(E,p_L)\leq(n-2)$, a contradiction.
It follows that $\frac{f_2}{f_1},\cdots,\frac{f_s}{f_1}$ can be viewed as holomorphic functions near $p_L$.
If we consider the map
\begin{align*}
\tilde{\tau}_1\circ \pi^{-1}   \circ f:U\setminus E &\rightarrow \tilde{\tau}_1(\tilde{U}_1)\\
y\ \ \ &\mapsto \left(f_1(y),\frac{f_2(y)}{f_1(y)},\cdots,\frac{f_s(y)}{f_1(y)},f_{s+1}(y),\cdots,f_n(y)\right).
\end{align*}
Then it holomorphically extends to $p_L$ and $p_L$ is mapped to $\left(0,\frac{\gamma_2'(0)}{\gamma_1'(0)},\cdots,\frac{\gamma_s'(0)}{\gamma_1'(0)},0,\cdots,0\right)$,
which corresponds to $(x_0,[L])\in\textbf{P}(N_{S/X})_{x_0}$. This implies that
\begin{align*}
F:=\pi^{-1}\circ f:Y\setminus E\rightarrow \tilde{X}
\end{align*}
holomorphically extends to $p_L$ and $F(p_L)=(x_0,[L])$.
Now for each $q_0\in E$, we can take $(V,z)$ being a local chart centered at $f(q_0)$ such that $S\cap V=\{z_1=\cdots=z_s=0\}$.
If some of $df_1(q_0),\cdots,df_s(q_0)$ does not vanish, we see that $F$ holomorphically extend to $q_0$ in the same way.
It is easy to see that the analytic subset
\begin{align*}
\{q\in E:df_1(q)=\cdots=df_s(q)=0\}
\end{align*}
of $f^{-1}(V)$ coincides with
\begin{align*}
\{q\in E:(df)_q(T_qY)\subseteq T_{f(q)}S\}
\end{align*}
Therefore we obtain a holomorphic map $F:Y'\rightarrow \tilde{X}$ such that
\begin{itemize}
\item
$F$ is surjective,
\item
$ Y'\setminus E=Y\setminus E$,
\item
$Y'\cap E=E\setminus\{q\in E:(df)_q(T_qY)\subseteq T_{f(q)}S\}=(E)_{\mathrm{reg}}\setminus\{q\in E:(df)_q(T_qY)\subseteq T_{f(q)}S\}$.
\end{itemize}

The rest of the proof is similar as that of Theorem \ref{thm: modify a regular point text}.
First, we prove that $F$ is non-degenerate on $Y'$. The analytic subset of $Y'$
\begin{align*}
J_F^{-1}(0):=\{y\in Y': \mathrm{the}\ \mathrm{Jacobian}\ \mathrm{of}\ F\ \mathrm{vanishes}\ \mathrm{at}\ y\}
\end{align*}
is either empty or has pure dimension $(n-1)$. The assumption $E$ is irreducible yields that $Y'\cap E$ is also irreducible.
Since $J^{-1}_F(0)\subseteq Y'\cap E$, we can infer $J_F^{-1}(0)=Y'\cap E$ if $J_F^{-1}(0)$ is non-empty.
On the other hand, $F$ maps $Y'\cap E$ onto $\tilde{S}$.
Noticing that $X'\cap X_0$ and $\tilde{S}$ are complex manifolds of the same dimension,
we can infer $F|_{Y'\cap E}$ is non-degenerate at some point on $Y'\cap E$, saying $p_0$.
In particular, $F|_{Y'\cap E}$ is injective on $Y'\cap E$ near $p_0$, which makes $F$ is injective in $Y'$ near $p_0$.
\cite[Theorem 8.5]{FG02} implies that $J_F(p_0)\neq0$, a contradiction. Therefore, we demonstrate that $J_F^{-1}(0)$ is empty, i.e. $F$ is non-degenerate on $Y'$.

We then prove that $F:Y'\rightarrow \tilde{X}$ is injective.
Suppose to the contrary, for some $p_1\neq p_2\in Y'\cap E$, $F(p_1)= F(p_2)=q\in \tilde{S}$.
Then there exists some neighbourhood $U_i$ of $p_i$, $V$ of $q$, such that $F:U_i\rightarrow V$ is biholomorphism.
This contradicts with the fact that $F$ is injective on $Y\setminus E$.
Now $F:Y'\rightarrow \tilde{X}$ is a biholomorphism and we can denote its inverse by $G:\tilde{X}\rightarrow Y'$.

Finally, we prove that that $Y'\cap E=E$. Taking $p_0\in E$, there exists $p_k\in Y'\cap E$ converges to $p_0$.
From the construction of $F$, we see that $F(p_k)\in\textbf{P}(N_{S/X})_{f(p_k)}$ approaching the the fiber $\textbf{P}(N_{S/X})_{f(p_0)}$.
Up to a subsequence, we may assume $\lim F(p_k)=\tilde{x_0}\in \textbf{P}(N_{S/X})_{f(p_0)}$. Since $G$ is continuous,
we see that $G(\tilde{x}_0)=\lim G( F(p_k))=\lim p_k=p_0$.
This implies that $E=G(\tilde{S})=Y'\cap E$ as desired.
We draw to the conclusion that $f:(Y,E)\ra(X,S)$ is isomorphic to $\pi:(\tilde{X},\tilde{S})\ra(X,S)$.
\end{proof}

	\end{document}